\long\def\symbolfootnote[#1]#2{\begingroup\def\thefootnote{\fnsymbol{footnote}}\footnote[#1]{#2}\endgroup}
\newtheorem{theorem}{Theorem}[section]
\newtheorem{lemma}[theorem]{Lemma}
\theoremstyle{remark}
\theoremstyle{definition}
\newtheorem{definition}[theorem]{Definition}
\theoremstyle{proposition}
\newtheorem{proposition}[theorem]{Proposition}
\newtheorem{conjecture}[theorem]{Conjecture}
\numberwithin{equation}{section}
\begin{document}
\author{Mengqing Zhan, Linfeng Zhou}
\title[]{The isoperimetric problem in the 2-dimensional Finsler space forms with $k=0$. \uppercase\expandafter{\romannumeral2}}
\date{}
\maketitle

\begin{abstract} 
This paper is a continuation of the second author's previous work\cite{Zh2}. We investigate the isoperimetric problem in the 2-dimensional Finsler space form $(F_B, B^2(1))$ with $k=0$ by using the Holmes-Thompson area and prove that the circle centered the origin achieves the local maximum area of the isoperimetric problem.\\

\noindent\textbf{2000 Mathematics Subject Classification:}
53B40, 53C60, 58B20.\\
\\
\textbf{Keywords and Phrases:  Isoperimetric problem, Finsler space forms.}
\end{abstract}

\section{\textbf{Introduction}}

In this paper, we define the \emph{Finsler space forms} to be the spherically symmetric projectively flat Finsler metrics of constant flag curvatures.  Actually, the Finsler space forms can be completely classified as follows (see \cite{Zh1}, \cite{MZ}, \cite{L}):

\begin{theorem}Up to a scaling, a spherically symmetric projectively flat Finsler metric $(\Omega, F)$ has a constant flag curvature $k$ if and only if 
\begin{enumerate}
\item[(1)] $k=1$, the metric is either the projective sphere model or Bryant metric \[F=\frac{|y|c_1(z_1)}{c_1(z_1)^2+\big(z_2+c_2(z_1)\big)^2}\] where
$z_1:=\sqrt{|x|^2-\frac{\langle x,y\rangle^2}{|y|^2}}, z_2:=\frac{\langle x,y\rangle}{|y|},
c_1(z_1):=\frac{\sqrt{2}}{2}\sqrt{2d_2+z_1^2+\sqrt{(2d_2+z_1^2)^2+4d_1^2}},$
$$c_2(z_1):=\pm\frac{\sqrt{2}}{2}\sqrt{-2d_2-z_1^2+\sqrt{(2d_2+z_1^2)^2+4d_1^2}},$$
$d_1$ and $d_2$ are positive real numbers, $\Omega=R^n$.

\item[(2)] $k=0$, the metric is either the Euclidean space $R^n$ or the Berwald's example where 
\[F=\frac{(\sqrt{|y|^2-(|x|^2|y|^2-\langle x,y\rangle^2)}+\langle x,y\rangle)^2}{(1-|x|^2)^2\sqrt{|y|^2-(|x|^2|y|^2-\langle x,y\rangle^2)}}, \Omega=B^n(1).\]

\item[(3)] $k=-1$, the metric is either the Klein metric (one model of the hyperbolic space)
\[F=\frac{\sqrt{|y|^2-(|x|^2|y|^2-\langle x,y\rangle^2)}}{1-|x|^2}, \Omega=B^n(1); \]
or the Funk metric of Randers type
\[F=\frac{\sqrt{|y|^2-(|x|^2|y|^2-\langle x,y\rangle^2)}+\langle x,y\rangle}{2(1-|x|^2)}, \Omega=B^n(1); \]
or $\Omega=B^n(\sqrt{2(d_2-d_1)})$ with Finsler metric
\[F=\frac{|y|c_1(z_1)}{c_1(z_1)^2-\big(z_2+c_2(z_1)\big)^2}\] where
$z_1:=\sqrt{|x|^2-\frac{\langle x,y\rangle^2}{|y|^2}}, z_2:=\frac{\langle x,y\rangle}{|y|},
c_1(z_1):=\frac{\sqrt{2}}{2}\sqrt{2d_2-z_1^2+\sqrt{(2d_2-z_1^2)^2-4d_1^2}},$
\[c_2(z_1):=\pm\frac{\sqrt{2}}{2}\sqrt{2d_2-z_1^2-\sqrt{(2d_2-z_1^2)^2-4d_1^2}},\] and $d_2>d_1$ are positive real numbers.
\end{enumerate}
\end{theorem}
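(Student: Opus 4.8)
The plan is to reduce the classification to a pair of differential equations for a single function of two variables and then to integrate that system case by case according to the sign of $k$.

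First I would exploit the spherical symmetry. Since $F$ is invariant under the standard $O(n)$-action, it can depend on $(x,y)$ only through the orthogonal invariants $|x|$, $|y|$, $\langle x,y\rangle$; homogeneity of degree one in $y$ then forces
$$F(x,y) = |y|\,\phi(r,s), \qquad r := |x|,\quad s := \frac{\langle x,y\rangle}{|y|},$$
so that $z_2 = s$ and $z_1 = \sqrt{r^2 - s^2}$ are precisely the adapted coordinates appearing in the statement. The whole problem now lives in the two variables $(r,s)$ (equivalently $(z_1,z_2)$) and becomes one of solving differential equations for the scalar function $\phi$.

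Second, I would translate the two hypotheses into equations on $\phi$. Projective flatness is Hamel's condition $F_{x^k y^i}y^k = F_{x^i}$; substituting $F = |y|\phi(r,s)$ and using that $r=|x|$ is independent of $y$ collapses this system (which must hold for all $y$) into a single linear second-order PDE relating $\phi_r,\phi_s,\phi_{rs},\phi_{ss}$, together with the explicit projective factor
$$P = \frac{F_{x^k}y^k}{2F} = \frac{1}{2\phi}\Big(\frac{s}{r}\phi_r + \phi_s\Big).$$
Constant flag curvature is then imposed through Berwald's formula for a projectively flat spray,
$$K = \frac{P^2 - P_{x^k}y^k}{F^2} = k,$$
which yields a second PDE. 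Rescaling $F$ multiplies $K$ by a positive constant, so I may normalize $k$ to one of $1,0,-1$; this is exactly what produces the three branches of the theorem.

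Third, I would integrate the combined system. Integrating the projective-flatness equation in the $s$-variable expresses $\phi$ in terms of free functions of $r$ alone; feeding this into $K=k$ then reduces the problem to ordinary differential equations in $r$. For $k>0$ the resulting ODE is solved by trigonometric-type quantities, for $k<0$ by hyperbolic ones, and for $k=0$ by algebraic/rational ones — precisely the trichotomy separating the projective-sphere/Bryant family, the Klein–Funk family, and the Euclidean/Berwald family. The constants of integration, fixed by requiring $F$ to be smooth and strongly convex near the origin, are the parameters $d_1,d_2$, and the auxiliary functions $c_1(z_1),c_2(z_1)$ in the statement are the closed-form solutions of this ODE; degenerate choices of the constants recover the model metrics (Euclidean, Klein, Funk) as limiting cases.

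The main obstacle is the last step: carrying out the explicit integration of the nonlinear curvature ODE and, more delicately, determining for which ranges of $d_1,d_2$ the resulting $\phi$ defines a genuine (positive, strongly convex, smooth away from the origin) Finsler metric, together with the maximal domain $\Omega$ on which this holds — this is what pins down $B^n(1)$, $B^n(\sqrt{2(d_2-d_1)})$, or $\mathbb{R}^n$ in each case. The converse direction, that each listed metric is indeed spherically symmetric, projectively flat, and of the stated constant flag curvature, is a direct verification by substituting back into Hamel's equation and Berwald's formula; these computations are carried out in \cite{Zh1}, \cite{MZ}, \cite{L}.
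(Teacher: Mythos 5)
The paper does not actually prove this theorem: it is imported wholesale from \cite{Zh1}, \cite{MZ} and \cite{L}, so there is no internal proof to compare against. Your outline is, in substance, the strategy of those cited papers: write $F=|y|\phi(r,s)$ with $r=|x|$, $s=\langle x,y\rangle/|y|$, reduce Hamel's projective-flatness criterion to a PDE for $\phi$, impose constant curvature through Berwald's formula $K=(P^2-P_{x^k}y^k)/F^2$ with $P=F_{x^k}y^k/(2F)$, and integrate case by case in the sign of $k$. (Minor slip: your displayed $P$ is missing a factor of $|y|$; $P$ must be positively homogeneous of degree one in $y$.)

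That said, as a standalone proof the proposal has a real gap: the entire content of the classification lies in the step you label ``the main obstacle'' --- actually integrating the curvature equation, showing that the list of solutions is \emph{exhaustive} (not merely that the listed metrics satisfy the equations), and determining the admissible parameter ranges and maximal domains $\Omega$ on which $\phi$ yields a positive, strongly convex Finsler metric. You describe what must happen there but do not carry it out, instead deferring to the references; the ``only if'' direction of the theorem is precisely this completeness argument and cannot be obtained by the substitution check you describe for the converse. Since the paper itself also only cites, your proposal is a faithful roadmap of the known proof rather than a proof, and it would need the explicit integration of the ODE system (and the convexity/domain analysis pinning down $B^n(1)$, $B^n(\sqrt{2(d_2-d_1)})$, $R^n$) to stand on its own.
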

\noindent Since the classical Riemannian space forms are included in the above theorem, the Finsler space forms we defined here can be viewed as a proper generalization of the Riemannian space forms. 

The famous 2-dimensional isoperimetric problem is that in a certain space, how to maximize the area of a simple closed curve under the constraint that the length of the curve to be fixed?

In mathematical history, this problem is quite old and many mathematicians have done lots of work on it. It can be generalized to the high dimensional compact Riemannian manifolds.  What's more, the isoperimetric inequality is proved to be equivalent to the Sobolev inequality which is often used in PDE theory. One can see a good survey article, written by  R. Osserman \cite{O}, about the developments in the theory of the isoperimetric problem.  For the Finslerian case, one can see \cite{PT} and \cite{Zh2} for the details. 

In the previous paper \cite{Zh2}, the second author proves that in 2-dimensional Finsler space $(F_B, B^2(1))$ with $k=0$ by using the Busemann-Hausdorff area,  the circle centered the origin achieves the local maximum area of the isoperimetric problem. 

Employing the similar method, we prove that \emph{in 2-dimensional Finsler space $(F_B, B^2(1))$ with $k=0$ by using the Holmes-Thompson area,  the circle centered the origin also achieves the local maximum area of the isoperimetric problem.} 

\section{\textbf{A brief review of the sufficiency theorem for isoperimetric problem in the calculus of variations}}
 
In the classical calculus of variations, the isoperimetric problem to be considered is that of minimizing an integral 
$$I=\int_{a}^{b}f(t,x(t),\dot{x}(t))dt=\int_{a}^{b}f(t,x_1,\dots,x_n,\dot{x}_1,\dots,\dot{x}_n)dt$$
in a class of admissible arcs
$$x_i(t), a\leq t\leq b; i=1,\dots,n,$$
joining two fixed points and satisfying a set of isoperimetric conditions 
$$I_{\alpha}=\int_{a}^{b}f_{\alpha}(t,x(t),\dot{x}(t))dt=l_{\alpha}, \alpha=1,\dots,m,$$
where the $l's$ are constants. 
It is assumed that the functions $f, f_{\alpha}$ are defined and have continuous derivatives of the first three orders in a region $R$ of points $(t,x,\dot{x})$. The points of $R$ will be called admissible. A continuous arc that can be divided into a finite number of suburbs on each of which it has continuous derivatives will be called admissible if its elements $(t,x,\dot{x})$ are all admissible. 

Before stating the sufficiency theorem proved in \cite{He}, let us recall some concepts.

An admissible arc $x_0$ is called a strong minimum of above isoperimetric problem  if there is a neighborhood $\mathcal{F}$ of $x_0$ in $tx$-space such that the inequality $I(x)\geq I(x_0)$ holds for all admissible arcs $x\neq x_0$ whose elements $(t,x(t))$ are in $\mathcal {F}$. If the inequality can be replaced by a strict inequality, the minimum is said to be a proper strong minimum.

The sufficiency conditions are given in terms of an integral $J(x)$ of the form
$$J(x)=I+\sum_{\alpha=1}^{m}\lambda_{\alpha}I_{\alpha}=\int_{a}^{b}F(t,x(t),\dot{x}(t),\lambda)dt,$$
where $F=f+\sum_{\alpha=1}^{m}\lambda_{\alpha}f_{\alpha}$ and the $\lambda$'s are constant multipliers.  An admissible arc and a set of constants $\lambda_{\alpha}$ having continuous second derivatives will be said to form an isoperimetric extremal if they satisfy the Euler-Lagrange equations 
\begin{equation}
F_{x_i}-\frac{dF_{\dot{x}_i}}{dt}=0.
\end{equation}

Let an admissible $x_0$ be an isoperimetric extremal joining the two points and satisfying the condition (2). The arc $x_0$ is said to be normal, if the equation $P_{i\alpha}a_{\alpha}=0$ where 
$$P_{i\alpha}=(f_{\alpha})_{ x_i}-\frac{d (f_{\alpha})_{\dot{x}_i}}{dt},$$
hold along $x_0$ only in case the constants $a_{\alpha}$ are all zero. 

The Weierstrass E-function for $F$ is given by
$$E(t,x,\dot{x},u):=F(t,x,u)-F(t,x,\dot{x})-\sum_{i=1}^{n}(u_i-\dot{x}_i)F_{\dot{x}_i}(t,x,\dot{x}).$$
An arc $x_0$ is said to satisfy the strict Weierstrass condition if for each $(t,x,\dot{x})$ in a neighborhood of $x_0$, the Weierstrass function
$$E(t,x,\dot{x},u)>0$$ 
holds for every admissible set $(t,x,u)\neq (t,x,\dot{x})$.

Finally the second variation of $J^{\prime\prime}(x_0,y)$ of $J$ along $x_0$ is of the form 
$$J^{\prime\prime}(x_0,y)=\int_a^b2\omega(t,y(t),\dot{y}(t))dt,$$
where $2\omega=\sum_{i,j}F_{x_ix_j}y_iy_j+2F_{x_i\dot{x}_j}y_i\dot{y}_j+F_{\dot{x}_i\dot{x}_j}\dot{y}_i\dot{y}_j$.

A sufficiency theorem for a strong minimum of the isoperimetric problem, which is proved by Hestenes in \cite{He}, can be stated as the following:
 \begin{theorem}
 Let $x_0$ be an admissible arc. Suppose there exist $\lambda_1,\dots,\lambda_m$ such that, relative to the function
$$J(x)=I+\sum_{\alpha=1}^{m}\lambda_{\alpha}I_{\alpha}=\int_{a}^{b}F(t,x(t),\dot{x}(t),\lambda)dt,$$
\begin{enumerate}
\item $x_0$ is isoperimetric extremal,
\item $x_0$ is normal,
\item $x_0$ satisfies the strict Weierstrass condition,
\item $J^{\prime\prime}(x_0,y)>0$ for every non-null admissible variations $y_i(t), (a\leq t\leq b)$, vanishing at $t=a$ and $t=b$ and satisfying with $x_0$ the equations 
         $$\int_{a}^b\big( (f_{\alpha})_{ x_i}y_i+(f_{\alpha})_{ \dot{x}_i}\dot{y}_i \big)dt=0,$$
\item along $x_0$ the inequality $\sum_{i,j}F_{\dot{x}_i\dot{x}_j}y_iy_j>0$ holds for every set $(y)\neq (0).$        
\end{enumerate}
Then $x_0$ is a proper strong minimum of the isoperimetric problem. 
 \end{theorem}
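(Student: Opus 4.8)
The plan is to reduce the constrained (isoperimetric) problem to a free problem for the augmented functional $J$ and then apply classical Weierstrass field theory. Since $x_0$ together with the multipliers $\lambda_1,\dots,\lambda_m$ is an isoperimetric extremal and every comparison arc $x$ under consideration satisfies the same side-conditions $I_\alpha(x)=l_\alpha=I_\alpha(x_0)$, the terms $\sum_\alpha \lambda_\alpha I_\alpha$ are constant along the admissible family, so that
$$J(x)-J(x_0)=I(x)-I(x_0)$$
for every such $x$. Thus it suffices to prove $J(x)>J(x_0)$ for all admissible $x\neq x_0$ lying in a suitable neighborhood and meeting the constraints, and this is precisely the conclusion one obtains from a sufficiency argument for the \emph{free} problem associated with $F$.

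First I would embed $x_0$ in a field of extremals. Here is where the hypotheses (2), (4) and (5) enter: the strengthened Legendre condition (5), namely positive definiteness of $F_{\dot x_i\dot x_j}$ along $x_0$, and the positivity of the second variation $J''(x_0,y)>0$ on the constraint-linearized variations (4), together with normality (2), guarantee that $x_0$ contains no conjugate point and can be surrounded by a (Mayer) field of extremals covering a neighborhood $\mathcal F$ simply and carrying a well-defined slope function $p(t,x)$. The normality condition is exactly what keeps the multipliers $\lambda_\alpha$ fixed and the linearized side-conditions nondegenerate, so that the accessory (Jacobi) problem governing the field is solvable.

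Next I would introduce Hilbert's invariant integral $J^*$ built from the slope function $p$, whose value along any arc in $\mathcal F$ joining the two fixed endpoints depends only on those endpoints; in particular $J^*(x)=J^*(x_0)=J(x_0)$. The standard computation then yields
$$J(x)-J(x_0)=J(x)-J^*(x)=\int_a^b E\big(t,x(t),p(t,x(t)),\dot x(t)\big)\,dt,$$
where $E$ is the Weierstrass $E$-function of $F$. By the strict Weierstrass condition (3) the integrand is nonnegative and vanishes only where $\dot x=p(t,x)$, i.e.\ only where $x$ coincides with the field extremal through its points. Hence the integral is strictly positive unless $x\equiv x_0$, giving $J(x)>J(x_0)$ and therefore $I(x)>I(x_0)$, which is the asserted proper strong minimum.

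The main obstacle is the field construction in the presence of the isoperimetric side-conditions: unlike the unconstrained case, the comparison arcs need only satisfy the \emph{integral} constraints $I_\alpha(x)=l_\alpha$ rather than pointwise ones, so one cannot directly quote the classical Weierstrass theory. The delicate point is to show that positive definiteness of the second variation restricted to variations satisfying the linearized constraints (hypothesis (4)), combined with normality (hypothesis (2)), still produces a genuine covering field and preserves the invariant-integral identity above. Once this compatibility between the field and the side-conditions is established, the Weierstrass inequality (3) closes the argument exactly as in the free problem.
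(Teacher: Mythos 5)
The paper does not actually prove this theorem: it is quoted from Hestenes \cite{He} and used as a black box, so there is no internal argument to compare against. Judged on its own terms, your proposal has a genuine gap, and it sits exactly where you yourself locate "the main obstacle." The opening reduction is fine: since all comparison arcs satisfy $I_{\alpha}(x)=l_{\alpha}=I_{\alpha}(x_0)$, one has $J(x)-J(x_0)=I(x)-I(x_0)$, so it suffices to show $J(x)>J(x_0)$ on the constrained neighborhood. But your next step --- embedding $x_0$ in a Mayer field for the \emph{free} problem associated with $F=f+\sum_{\alpha}\lambda_{\alpha}f_{\alpha}$ and invoking Hilbert's invariant integral --- requires the free Jacobi condition, i.e.\ $J''(x_0,y)>0$ for \emph{all} admissible variations vanishing at the endpoints. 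Hypothesis (4) only gives positivity on the subspace of variations satisfying the linearized side-conditions $\int_a^b\big((f_{\alpha})_{x_i}y_i+(f_{\alpha})_{\dot x_i}\dot y_i\big)dt=0$. Under the stated hypotheses $x_0$ can have conjugate points for the free accessory problem and need not be a minimum of the free functional $J$ at all, so the covering field with a single-valued slope function $p(t,x)$ need not exist, and the identity $J(x)-J(x_0)=\int_a^b E\,dt$ is not available. Asserting that normality plus the restricted positivity "still produces a genuine covering field" is not a proof step; it is the entire content of the theorem.

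There are two standard ways to close this gap, and your sketch commits to neither. One is Bolza's route \cite{Bo}: construct a multi-parameter family of isoperimetric extremals with \emph{varying} multipliers $\lambda_{\alpha}$ through the initial point, use normality to show the map to endpoint-and-constraint data is nonsingular, and compare a given admissible arc only with the field member having the same constraint values; the invariant-integral identity then holds up to terms that cancel because the constraint values agree. The other is the route of the very paper being cited: Hestenes's 1938 proof is specifically designed to \emph{avoid} field constructions for isoperimetric problems, arguing indirectly from a minimizing sequence together with the Weierstrass condition and the positivity of the second variation on the constrained variations. Either way, the passage from hypothesis (4) to a global comparison argument is the delicate step, and your proposal leaves it as an acknowledged but unresolved assumption.
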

 
 A point $t=c$ on $a<t\leq b$ will be said to define a conjugate point to $t=a$ on $x_0$ if there is a solution $y_i(t)$ of the Jacobi equations
 $$\omega_{y_i}-\frac{d\omega_{\dot{y}_i}}{dt}=0$$
such that 
$$\int_{a}^b\big( (f_{\alpha})_{ x_i}y_i+(f_{\alpha})_{ \dot{x}_i}\dot{y}_i \big)dt=0.$$
Furthermore, the solution $y(t)$  satisfies $y(a)=y(c)=0$ and $y(t)\not\equiv 0$ on $a<t<c$.

\begin{theorem}
The inequality $J^{\prime\prime}(x_0,y)>0$ holds for all $y\neq 0$ if and only if there is no point $c$ conjugate to $a$ on $0<t\neq b$ along $x_0$.
\end{theorem}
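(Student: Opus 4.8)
The plan is to read this as the classical Jacobi theorem applied to the \emph{accessory problem}, which is itself a quadratic isoperimetric problem: minimize $J''(x_0,y)=\int_a^b 2\omega(t,y,\dot y)\,dt$ over variations $y$ that vanish at $a$ and $b$ and obey the linearized side conditions $\int_a^b\big((f_\alpha)_{x_i}y_i+(f_\alpha)_{\dot x_i}\dot y_i\big)\,dt=0$. Throughout I would keep in force the two standing hypotheses that make this problem non-degenerate: the strengthened Legendre condition, that $(F_{\dot x_i\dot x_j})$ be positive definite along $x_0$ (condition (5) of the sufficiency theorem), and normality (condition (2)), which guarantees that the constrained accessory problem has no abnormal directions. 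The statement is an equivalence, so I would prove the two implications separately.

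For the necessity direction ($J''>0\Rightarrow$ no conjugate point) I would argue by contraposition. Suppose some $c$ with $a<c\le b$ is conjugate to $a$, furnished by a nontrivial Jacobi solution $\eta$ with $\eta(a)=\eta(c)=0$, $\eta\not\equiv0$ on $(a,c)$, and satisfying the linearized constraints. Extend $\eta$ by zero on $[c,b]$ (a trivial extension if $c=b$) to obtain an admissible variation $\bar\eta\neq0$: it still vanishes at both endpoints and, since the added piece is identically zero, it still satisfies the constraint integrals. Integrating $2\omega(t,\eta,\dot\eta)$ by parts and invoking the Jacobi equation $\omega_{y_i}-\tfrac{d}{dt}\omega_{\dot y_i}=0$ collapses $\int_a^c 2\omega\,dt$ to the boundary expression $\big[\eta_i\,\omega_{\dot y_i}\big]_a^c$, which vanishes because $\eta(a)=\eta(c)=0$. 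Hence $J''(x_0,\bar\eta)=0$ for a nonzero admissible $\bar\eta$, contradicting positive definiteness.

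For the sufficiency direction (no conjugate point $\Rightarrow J''>0$) I would use the field/Riccati method. Since $t=b$ is not conjugate to $a$ and, under the standing normality and Legendre hypotheses, conjugate points form a discrete set, I can enlarge the interval slightly to $[a,b']$ with $b'>b$ still free of conjugate points on $(a,b']$. On this interval I would assemble a full set of solutions of the constrained Jacobi equations that vanish at $a$; the absence of conjugate points is exactly the non-vanishing on $(a,b']$ of the determinant built from these solutions, so the associated field is non-singular and produces a symmetric matrix $W(t)$ solving the matrix Riccati equation throughout $[a,b]$. Using $W$ I would complete the square, writing $2\omega=\sum_{i,j}F_{\dot x_i\dot x_j}(\dot y_i-\phi_i)(\dot y_j-\phi_j)+\tfrac{d}{dt}\big(\sum_{i,j}W_{ij}y_iy_j\big)$ for suitable linear forms $\phi_i$ in $y$. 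Integrating over $[a,b]$ annihilates the total-derivative term because $y(a)=y(b)=0$, leaving $J''(x_0,y)=\int_a^b\sum_{i,j}F_{\dot x_i\dot x_j}(\dot y_i-\phi_i)(\dot y_j-\phi_j)\,dt\ge0$ by positive definiteness of $(F_{\dot x_i\dot x_j})$; equality would force $\dot y=\phi(y)$, a linear first-order system with initial value $y(a)=0$, hence $y\equiv0$.

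The main obstacle is the sufficiency direction, and specifically the bookkeeping of the $m$ isoperimetric side conditions throughout the field construction. Unlike the unconstrained case, the accessory Euler--Lagrange system carries the multipliers coming from the linearized constraints, so the ``Jacobi equations'' must be read as the equations of this constrained accessory problem; the normality hypothesis is precisely what keeps the constrained field non-degenerate and lets the Riccati solution $W(t)$ persist on all of $[a,b]$. Verifying that the completed-square identity remains valid in the presence of the constraints --- that is, that the constrained admissible variations are exactly those for which the boundary term drops out and the residual integrand is a positive-definite quadratic form --- is the technical crux, and is where I would lean on the detailed constructions of Hestenes \cite{He}.
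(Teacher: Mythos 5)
The paper does not prove this statement at all: it is quoted as a classical result, with the remark that it follows ``with a same method'' from Hestenes \cite{He} (and Bolza \cite{Bo}), so there is no in-paper argument to compare against. Your two-direction plan is exactly the standard route those sources take. The necessity half is essentially complete as you state it, with one small accounting point: for an isoperimetric problem the accessory Euler equations are not $\omega_{y_i}-\tfrac{d}{dt}\omega_{\dot y_i}=0$ but carry multipliers from the linearized side conditions (in the paper's later notation, $\Psi(y)+\mu U=0$), so after integrating by parts you are left not with zero but with $-\sum_\alpha\mu_\alpha\int_a^c\big((f_\alpha)_{x_i}\eta_i+(f_\alpha)_{\dot x_i}\dot\eta_i\big)\,dt$; this vanishes precisely because the conjugate-point definition requires the Jacobi solution to satisfy the constraint integrals, and you should say so explicitly rather than invoking the unconstrained Jacobi equation.

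The sufficiency half, as you yourself flag, is a program rather than a proof. The existence of a non-singular constrained field (equivalently, of the Riccati solution $W$ on all of $[a,b]$, or of the slightly enlarged conjugate-point-free interval $[a,b']$) and the validity of the completed-square identity in the presence of the bordered determinant condition are not routine consequences of the unconstrained theory --- they are the actual content of the theorem, and normality enters in a specific way (it lets one solve for the accessory multipliers and shows the bordered determinant has simple zeros, so conjugate points are isolated and the interval can be enlarged). Deferring exactly this step to Hestenes means your write-up proves no more than the paper does, which simply cites him; that is defensible as a reconstruction of what the paper intends, but if the goal is a self-contained proof, the constrained field construction is the missing piece and would need to be carried out, not named.
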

  
For our purpose, we shall concern the calculus of variations in parametric form in 2-dimensional Euclidean space.  Let an arc be of the parametric form
  $$x(t):= (x_1(t), x_2(t))  \quad (a\leq t\leq b)$$
in $R^2$, where the functions $x_i(t)$ are assumed to be continuous first and second derivatives $\dot{x}_i(t)$ for $i=1, 2$ and if more over $\dot{x}_1^2+\dot{x}_2^2\neq 0$ in $(a,b)$. 

An integral 
$$\int_a^b f(t,x(t),\dot{x}(t))dt$$
can be shown to be independent of the parametrization of the arc $x$ if and only if the integrand $f$ is independent of $t$ and is positively homogeneous in $\dot{x}$ of degree one. 

The isoperimetric problem in parametric form can be stated as the following type.
Among all admissible curves joining two given points for which the integral
$$K=\int_a^b g(x_1,x_2,\dot{x}_1,\dot{x}_2)dt$$
takes a given value $l$, how to determine the one which minimizes the integral 
  $$I=\int_a^b f(x(t),\dot{x}(t))dt=\int_a^b f(x_1,x_2,\dot{x}_1,\dot{x}_2)dt.$$ 
The two function $f$ and $g$ are positively homogeneous in $\dot{x}$ of degree one and have continuous derivatives of the first three orders.

Let 
$$J(x)=I+\lambda K=\int_a^b H(x_1,x_2,\dot{x}_1,\dot{x}_2,\lambda)dt$$
where $H=f+\lambda g$. With a slight difference, the sufficient theorem proved by Hestenes can be illustrated by the following.
\begin{theorem} Let $x_0$ be an admissible arc. Suppose there exist $\lambda_0$ such that, relative to the function
$$J(x)=\int_{a}^{b}H(x_1,x_2,\dot{x}_1,\dot{x}_2,\lambda)dt,$$
\begin{enumerate}
\item $x_0$ is isoperimetric extremal, i.e.
  $$H_{x_i}-\frac{dH_{\dot{x}_i}}{dt}=0$$
  for $i=1,2$,
\item $x_0$ is normal which means the equation $P_{i}\neq 0$ where 
$$P_{i}=g_{ x_i}-\frac{d g_{\dot{x}_i}}{dt},$$
hold along $x_0$ for $i=1,2$,

\item  the Weierstrass function
$$E(x,\dot{x},u)>0$$ 
holds for every admissible set $(x,u)$ with  $u\neq k\dot{x} (k>0)$,

\item $J^{\prime\prime}(x_0,y)>0$ for every admissible variations $y_i(t)\neq \rho(t)\dot{x}_{0i}(t), (a\leq t\leq b)$, vanishing at $t=a$ and $t=b$ and satisfying with $x_0$ the equations 
         $$\int_{a}^b\big( g_{ x_i}y_i+g_{ \dot{x}_i}\dot{y}_i \big)dt=0\quad (i=1,2),$$
\item along $x_0$ the inequality $\sum_{i,j=1}^2H_{\dot{x}_i\dot{x}_j}y_iy_j>0$ holds for all $y\neq k\dot{x}_0(t).$        
\end{enumerate}
Then $x_0$ is a proper strong minimum of the isoperimetric problem. 
\end{theorem}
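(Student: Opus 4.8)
The plan is to deduce this parametric statement from the general (non-parametric, multi-constraint) sufficiency theorem stated above by passing to a local graph representation of $x_0$ and specializing to one dependent variable and one isoperimetric constraint. The essential feature that distinguishes the parametric case --- and the source of every ``$\neq k\dot{x}_0$'' qualification in the five hypotheses --- is the positive homogeneity of degree one of $f$ and $g$ in $\dot{x}$, equivalently of $H=f+\lambda g$. By Euler's identity this gives $\sum_i \dot{x}_i H_{\dot{x}_i}=H$ and, differentiating once more, $\sum_j H_{\dot{x}_i\dot{x}_j}\dot{x}_j=0$ for $i=1,2$. Hence the Hessian $\big(H_{\dot{x}_i\dot{x}_j}\big)$ is everywhere singular with $\dot{x}_0$ in its kernel, so the Legendre-type condition (5) cannot hold for $y$ parallel to $\dot{x}_0$; likewise a variation $y=\rho(t)\dot{x}_0$ is an infinitesimal reparametrization that leaves every parametrization-invariant integral unchanged, so such variations must be excluded in (4). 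The whole proof consists in showing that, once these degenerate directions are quotiented out, the parametric hypotheses are exactly the non-parametric hypotheses for the reduced problem.

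To carry out the reduction, fix a subarc of $x_0$ on which, say, $\dot{x}_{01}\neq 0$ (one of the two derivatives is nonzero everywhere since $\dot{x}_1^2+\dot{x}_2^2\neq 0$, and on the complementary subarcs one interchanges the roles of $x_1$ and $x_2$). There I use $x_1$ as the independent variable and write the curve as a graph $x_2=x_2(x_1)$. Homogeneity lets me normalize the first slot of the velocity: setting
$$\tilde f(x_1,x_2,p):=f(x_1,x_2,1,p),\qquad \tilde g(x_1,x_2,p):=g(x_1,x_2,1,p),$$
the invariance of the integrals gives $I=\int \tilde f\big(x_1,x_2,x_2'\big)\,dx_1$ and $K=\int \tilde g\big(x_1,x_2,x_2'\big)\,dx_1$, with $x_2'=dx_2/dx_1$. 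This is a non-parametric isoperimetric problem with $n=1$ dependent variable and $m=1$ constraint, governed by $\tilde H=\tilde f+\lambda\tilde g$, to which the general sufficiency theorem applies.

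It then remains to match the five hypotheses. The Euler--Lagrange equations $H_{x_i}-\tfrac{d}{dt}H_{\dot{x}_i}=0$ for the homogeneous $H$ are equivalent, along $x_0$, to the single reduced equation $\tilde H_{x_2}-\tfrac{d}{dx_1}\tilde H_{x_2'}=0$, so (1) transfers; the same computation applied to $g$ alone turns parametric normality (2) into the non-vanishing of the reduced quantity $P=\tilde g_{x_2}-\tfrac{d}{dx_1}\tilde g_{x_2'}$, i.e.\ non-triviality of the constraint variation. Writing $\tilde E(x_1,x_2,x_2',q)=E\big(x,\dot{x},u\big)/\dot{x}_1$ with $u=(\dot{x}_1,\dot{x}_1 q)$ up to the reparametrization freedom, the strict parametric Weierstrass condition (3) for $u\neq k\dot{x}$ is precisely $\tilde E>0$ off the extremal slope, giving hypothesis (3) of the general theorem. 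For the quadratic conditions I use the classical Weierstrass invariant: homogeneity forces $H_{\dot{x}_1\dot{x}_1}/\dot{x}_2^2=H_{\dot{x}_2\dot{x}_2}/\dot{x}_1^2=-H_{\dot{x}_1\dot{x}_2}/(\dot{x}_1\dot{x}_2)=:F_1$, and a direct substitution shows $\sum_{i,j}H_{\dot{x}_i\dot{x}_j}y_iy_j=F_1\,(\dot{x}_1 y_2-\dot{x}_2 y_1)^2$, a perfect square positive exactly when $y$ is not a multiple of $\dot{x}_0$, provided $F_1>0$; moreover $F_1=\tilde H_{x_2'x_2'}\,\dot{x}_1^{-3}$ up to the orientation factor, so (5) is equivalent to the reduced strengthened Legendre condition $\tilde H_{x_2'x_2'}>0$. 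Finally, since infinitesimal reparametrizations $y=\rho\dot{x}_0$ are exactly the variations on which the transverse combination $\dot{x}_1 y_2-\dot{x}_2 y_1$ vanishes, condition (4) with these directions removed matches the non-null second-variation hypothesis of the general theorem, which by the conjugate-point criterion above is in turn equivalent to the absence of a conjugate point for the reduced Jacobi equation.

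The main obstacle I anticipate is bookkeeping the homogeneity-induced degeneracy cleanly: one must verify that the ``slight difference'' between the two theorems --- the replacement of ``$y\neq 0$'' by ``$y\neq k\dot{x}_0$'' in (4) and (5), and of ``$u\neq\dot{x}$'' by ``$u\neq k\dot{x}$'' in (3) --- is not a weakening but exactly the correct quotient by the reparametrization direction, and that the reduction is independent of the auxiliary choice of which coordinate serves as the independent variable, so that the conclusion is a genuine property of the unparametrized curve $x_0$. Granting this, the general sufficiency theorem yields a proper strong minimum for the reduced non-parametric problem on each subarc, and parametrization invariance patches these into the assertion that $x_0$ is a proper strong minimum of the parametric isoperimetric problem.
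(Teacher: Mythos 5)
The paper offers no proof of this statement at all: it is presented as the parametric analogue of Hestenes' theorem with a pointer to \cite{He} (``with a slight difference, the sufficient theorem proved by Hestenes can be illustrated by the following''). Your plan --- deducing the parametric theorem from the non-parametric Theorem 2.2 via a local graph representation --- is therefore a genuinely independent route, and your homogeneity bookkeeping (Euler's identity, $\sum_j H_{\dot{x}_i\dot{x}_j}\dot{x}_j=0$, the Weierstrass invariant $F_1$ with $\sum_{i,j}H_{\dot{x}_i\dot{x}_j}y_iy_j=F_1(\dot{x}_1y_2-\dot{x}_2y_1)^2$) is correct and is exactly the right local dictionary between the two sets of hypotheses. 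But the reduction cannot deliver the conclusion, for reasons that are structural rather than cosmetic.

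First, a \emph{strong} minimum quantifies over all admissible comparison arcs in a $tx$- (here $x$-) neighborhood of $x_0$, and $C^0$-closeness puts no restriction on tangent directions: a competitor near a subarc on which $\dot{x}_{01}>0$ may still double back, hence is not a graph $x_2=x_2(x_1)$ and is invisible to the reduced problem. Your own identity $E(x,\dot{x},u)=u_1\tilde{E}$ signals this --- it converts the parametric Weierstrass condition into the reduced one only for directions with $u_1>0$. So strong minimality of the reduced problem is a statement about a strictly smaller competitor class. Second, the isoperimetric constraint is global: cutting $x_0$ into monotone subarcs yields several reduced problems, but a global competitor with $K=l$ distributes the constraint among the pieces in an uncontrolled way, so the restricted pieces satisfy none of the sub-constraints $K_i=l_i$ and the sufficiency theorem cannot be invoked piecewise; likewise the subarc endpoints of a competitor are not fixed, so even without the constraint the final ``patching'' step does not follow. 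This is why the classical treatments (\cite{He}, \cite{Bo}) prove parametric sufficiency directly --- by an expansion of $J$ along the arc, or a field of extremals with Hilbert's invariant integral --- using the graph reduction only for the local necessary conditions (Euler, Legendre, Jacobi). To repair your argument you would need to replace the patching step by such a direct global construction, at which point the reduction is no longer doing the work.
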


In the Bolza's book \cite{Bo}, the Jacobi equation along an isoperimetric extremal admissible curve $x_0$ turns out to be  
\[\Psi(y)+\mu U=0\]
where $\Psi(y)=H_2y-\frac{d}{dt}(H_1\dot{y})$,   $H_1=\frac{H_{\dot{x}_1\dot{x}_1}}{\dot{x}_2^2}$, 
$H_2=\frac{H_{x_1x_1}-\ddot{x}_2^2H_1-\frac{d L}{dt}}{\dot{x}_2^2}$, $L=H_{x_1\dot{x}_1}-\dot{x}_2\ddot{x}_2H_1$, $U=g_{x_1\dot{x}_2}-g_{\dot{x}_1x_2}+g_1(\dot{x}_1\ddot{x}_2-\ddot{x}_1\dot{x}_2)$, $g_1=\frac{g_{\dot{x}_1\dot{x}_1}}{\dot{x}_2^2}$. Moreover, the solution $y(t)$ shall satisfy the condition 
\[\int_a^b Uy dt=0.\]
 A conjugate point  $t=c$ to the point $t=a$ on $x_0$ means there is a solution $y(t)$ of the above Jacobi equation with the integral condition satisfying $y(a)=y(c)=0$ and $y(t)\not\equiv 0$ on $a<t<c$.
 
With a same method in the previous theorem, it can be proved that  $J^{\prime\prime}(x_0,y)>0$ holds if and only if there is no point $c$ conjugate to $a$ on $0<t\neq b$.

\section{\textbf{The Holmes-Thompson area in the 2-dimensional Finsler space forms with $k=0$}}
From the previous definition, the 2-dimensional  non-Riemannian Finsler space form with $k=0$ can be written as \begin{equation}\label{eqn1}
F_B=|y|\phi\left(|x|,\frac{\langle x,y\rangle}{|y|}\right)=\frac{\left(\sqrt{|y|^2-|x|^2|y|^2+ \langle x,y\rangle^2} + \langle x,y \rangle \right)^2}{(1-|x|^2)^2 \sqrt{|y|^2-|x|^2|y|^2+ \langle x,y\rangle^2} },
\end{equation}
which defined on a ball $B^2(1)=\{x\in R^2| |x|<1\}$.  
If let $|x|=r, \frac{\langle x,y\rangle}{|y|}=s$, then by the formula \ref{eqn1}, we can get\begin{equation}\label{eqn2}
\phi(r,s)=\frac{(\sqrt{1-r^2+s^2}+s)^2}{(1-r^2)^2\sqrt{1-r^2+s^2}}
\end{equation}  
by the notation of the spherically symmetric Finsler metrics. 

The Holmes-Thompson volume from $dV_{HT}=\sigma_{HT}(x)dx$ of a Finsler metric $F$ is defined as following:
\[\sigma_{HT}(x)=\frac{1}{Vol(B^n)}\int_{F(x,y)<1}det(g_{ij}(x,y))dy,\]
where $Vol(B^n)$ means the Euclidean volume of a unit ball in $R^n$.
To get the Holmes-Thompson area element $\sigma_{HT}(r)$, we need the following formula which is the similar to the $(\alpha, \beta)$ metric \cite{CS}. 
\begin{lemma}
Let $F=u\phi(r,s)$ be a spherically symmetric Finsler metric on $\Omega$. Its Holmes-Thompson volume element $\sigma_{HT}$ of $F$ is given by
\begin{equation}\label{eqn3}
\sigma_{HT}(r)=\frac{\int_0^\pi(\sin^{n-2} t)T(r, r\cos t)\mathrm{d}t}{\int_0^\pi \sin^{n-2} t\mathrm{d}t}, 
\end{equation}
where $T(r,s):=\phi(\phi-s\phi_s)^{n-2}[(\phi-s\phi_s)+(r^2-s^2)\phi_{ss}]$.
\end{lemma}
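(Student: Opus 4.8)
\emph{Proof proposal.} The plan is to reduce the $n$-dimensional integral defining $\sigma_{HT}$ to the one-dimensional angular integral in \eqref{eqn3} in three stages: compute the determinant of the fundamental tensor, integrate out the radial variable of $y$ by homogeneity, and collapse the remaining integral over $S^{n-1}$ to an integral over a single polar angle.

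First I would compute the fundamental tensor $g_{ij}=FF_{y^iy^j}+F_{y^i}F_{y^j}$ of $F=u\phi(r,s)$, where $u=|y|$, $r=|x|$ and $s=\langle x,y\rangle/|y|$. From
\[
u_{y^i}=\frac{y_i}{u},\qquad s_{y^i}=\frac{x_i}{u}-\frac{s\,y_i}{u^2},\qquad r_{y^i}=0,
\]
one writes $g_{ij}$ as a linear combination of $\delta_{ij}$, $x_ix_j$, $y_iy_j$ and $x_iy_j+x_jy_i$, and its determinant is evaluated exactly as in the $(\alpha,\beta)$-case \cite{CS}, with $r^2$ in the role of the squared norm of $\beta$. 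The outcome is
\[
\det(g_{ij})=\phi^{\,n+1}(\phi-s\phi_s)^{n-2}\big[(\phi-s\phi_s)+(r^2-s^2)\phi_{ss}\big]=\phi^{\,n}\,T(r,s).
\]
This determinant is the computational heart of the argument and the step I expect to be the main obstacle, since it requires carefully tracking the rank-one and rank-two updates of $\delta_{ij}$; everything afterwards is a clean change of variables.

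Next, because $F$ is positively homogeneous of degree one in $y$, the tensor $g_{ij}(x,y)$ is homogeneous of degree zero in $y$, so $\det(g_{ij})$ depends on $y$ only through its direction. Writing $y=u\omega$ with $\omega\in S^{n-1}$, we have $dy=u^{n-1}\,du\,d\omega$ and $s=\langle x,\omega\rangle=r\cos t$, where $t$ is the angle between $x$ and $\omega$; the indicatrix condition $F(x,y)<1$ reads $u<1/\phi(r,s)$. Since the integrand is independent of $u$, the radial integration $\int_0^{1/\phi}u^{n-1}\,du=\tfrac{1}{n\phi^{\,n}}$ may be carried out first, and using $\det(g_{ij})=\phi^{\,n}T$ to cancel the factor $\phi^{-n}$ we obtain
\[
\int_{F<1}\det(g_{ij})\,dy=\frac1n\int_{S^{n-1}}\frac{\det(g_{ij})}{\phi^{\,n}}\,d\omega=\frac1n\int_{S^{n-1}}T(r,r\cos t)\,d\omega.
\]

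Finally I would collapse the sphere integral. As $T(r,r\cos t)$ depends on $\omega$ only through the polar angle $t$ measured from $x/|x|$, the slicing formula $\int_{S^{n-1}}h(\cos t)\,d\omega=\omega_{n-2}\int_0^\pi h(\cos t)\sin^{n-2}t\,dt$, with $\omega_{n-2}=\mathrm{Vol}(S^{n-2})$, gives
\[
\int_{F<1}\det(g_{ij})\,dy=\frac{\omega_{n-2}}{n}\int_0^\pi T(r,r\cos t)\sin^{n-2}t\,dt.
\]
Dividing by $\mathrm{Vol}(B^n)=\omega_{n-1}/n$ and invoking the standard identity $\omega_{n-1}=\omega_{n-2}\int_0^\pi\sin^{n-2}t\,dt$, the constants collapse to $\omega_{n-2}/\omega_{n-1}=1/\int_0^\pi\sin^{n-2}t\,dt$, and \eqref{eqn3} follows. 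In the case $n=2$ of interest, $\sin^{n-2}t\equiv1$ and the formula reduces to $\sigma_{HT}(r)=\frac1\pi\int_0^\pi T(r,r\cos t)\,dt$.
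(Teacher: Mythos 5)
Your proposal is correct, and it fills in exactly the argument the paper intends: the paper states this lemma without proof, merely remarking that it is analogous to the $(\alpha,\beta)$-metric computation in \cite{CS}, and your three steps (the determinant identity $\det(g_{ij})=\phi^{n}T(r,s)$, radial integration over $u<1/\phi$ using $0$-homogeneity, and the polar slicing of $S^{n-1}$ with the normalization $\omega_{n-1}=\omega_{n-2}\int_0^\pi\sin^{n-2}t\,\mathrm{d}t$) are precisely that standard derivation. No gaps.
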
  

\begin{theorem}
For 2-dimensional non-Riemannian Finsler space form with k=0 $(F_B, B^2(1))$, the Holmes-Thompson area element is given by
\[\sigma_{HT}(r)=\frac{1+r^2-\frac{1}{8}r^4}{(1-r^2)^{\frac{7}{2}}},\]
where $r:=|x|$.
\end{theorem}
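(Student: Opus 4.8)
The plan is to evaluate the formula of the preceding lemma in the special case $n=2$. When $n=2$ the weight $\sin^{n-2}t=1$, so the normalizing denominator is $\int_0^\pi \mathrm{d}t=\pi$ and the exponent $(\phi-s\phi_s)^{n-2}=1$ disappears from $T$. Thus the statement reduces to the identity
\[
\sigma_{HT}(r)=\frac{1}{\pi}\int_0^\pi T(r,r\cos t)\,\mathrm{d}t,\qquad T(r,s)=\phi\bigl[(\phi-s\phi_s)+(r^2-s^2)\phi_{ss}\bigr],
\]
with $\phi$ given by \eqref{eqn2}. The entire proof is then the computation of $\phi_s,\phi_{ss}$, the assembly of $T$, and one explicit integration.

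First I would compute the $s$-derivatives. Writing $w:=\sqrt{1-r^2+s^2}$, so that $w_s=s/w$ and $w^2-s^2=1-r^2$, and $\phi=(w+s)^2/\bigl[(1-r^2)^2 w\bigr]$, I expect the key algebraic collapse
\[
\phi-s\phi_s=\frac{1}{w^{3}},\qquad \phi_{ss}=\frac{3}{w^{5}},
\]
both of which follow after clearing denominators and using $w^2-s^2=1-r^2$. These two identities are the heart of the matter: they turn the a priori unwieldy $T$ into
\[
(\phi-s\phi_s)+(r^2-s^2)\phi_{ss}=\frac{w^2+3(r^2-s^2)}{w^5}=\frac{1+2r^2-2s^2}{w^5},\qquad T=\frac{(w+s)^2(1+2r^2-2s^2)}{(1-r^2)^2\,w^{6}}.
\]

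Next I would substitute $s=r\cos t$, so that $w^2=1-r^2\sin^2 t$ and $r^2-s^2=r^2\sin^2 t$, and integrate over $[0,\pi]$. The crucial simplification is a parity argument: in $(w+s)^2=w^2+2ws+s^2$ the cross term $2ws$ is odd in $s$, hence odd in $\cos t$, and therefore integrates to zero under $t\mapsto\pi-t$ (which fixes $w$ but sends $\cos t\mapsto-\cos t$). What remains, namely $(w^2+s^2)(1+2r^2-2s^2)/w^{6}$, involves $w$ only through $w^2=1-r^2\sin^2 t$, so every square root disappears and no elliptic integral can survive. The integrand becomes a rational function of $\sin^2 t$, and the problem collapses to the three elementary integrals $\int_0^\pi(1-r^2\sin^2 t)^{-m}\,\mathrm{d}t$ for $m=1,2,3$.

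Finally I would evaluate those integrals, starting from the base case $\int_0^\pi(1-r^2\sin^2 t)^{-1}\mathrm{d}t=\pi(1-r^2)^{-1/2}$ (obtained via $u=\tan t$) and differentiating in the parameter $r^2$ to reach $m=2,3$, then collect terms. After dividing by $\pi$, the powers of $(1-r^2)$ combine to $(1-r^2)^{-7/2}$ and the polynomial numerator reduces to $1+r^2-\tfrac18 r^4$, yielding the claimed $\sigma_{HT}(r)$. The main obstacle I anticipate is bookkeeping rather than conceptual: correctly carrying out $\phi_{ss}$ and the several cancellations so that the collapse $\phi-s\phi_s=w^{-3}$, $\phi_{ss}=3w^{-5}$ actually emerges. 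Once that is in hand, the parity observation guarantees that the surviving integrals are elementary and the final assembly is routine.
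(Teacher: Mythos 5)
Your proposal is correct and follows the same route as the paper: specialize Lemma 3.1 to $n=2$, compute $T(r,s)$ (your identities $\phi-s\phi_s=w^{-3}$, $\phi_{ss}=3w^{-5}$ check out and reproduce the paper's $T=\dfrac{3-2\rho^2}{\rho^6(\rho-s)^2}$ since $3-2w^2=1+2r^2-2s^2$), then integrate over $t$. The paper merely asserts the value of the resulting integral, whereas you supply the missing evaluation — rationalizing $(\rho-s)^{-2}$ to $(w+s)^2/(1-r^2)^2$, killing the cross term by the symmetry $t\mapsto\pi-t$, and reducing to $\int_0^\pi(1-r^2\sin^2t)^{-m}\,\mathrm{d}t$ for $m=1,2,3$ — which is a worthwhile addition but not a different method.
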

\begin{proof}
Substituting $\phi(r,s)=\frac{(\sqrt{1-r^2+s^2}+s)^2}{(1-r^2)^2\sqrt{1-r^2+s^2}}$ into $T(r,s)$ will yield 
\[T(r,s)=\frac{3-2\rho^2}{\rho^6(\rho-s)^2},\]
where $\rho=\sqrt{1-r^2+s^2}$.
So the Holmes-Thompson area element is
\begin{equation}\label{eqn5}\sigma_{HT}(r)=\frac{1}{\pi}\int_0^\pi \frac{1+2r^2 \sin^2 t}{(1-r^2\sin^2 t)^3(\sqrt{1-r^2\sin^2 t} -r\cos t)^2} \mathrm{d}t=\frac{1+r^2-\frac{1}{8}r^4}{(1-r^2)^{\frac{7}{2}}}.\end{equation}  
\end{proof}

Furthermore, one can reduce the Holmes-Thompson area to the line integral.
\begin{theorem}
Suppose a domain $\Omega$ is enclosed by a simple closed curve $c(t)=(x_1(t), x_2(t))$ where $t\in [t_0, t_1]$ and $c(t_0)=c(t_1)$. The Holmes-Thompson area $A_{HT}$ of the domain $\Omega$ in $(F_B, B^2(1))$ is of form
\[ A_{HT}=\frac{1}{8} \int_{t_0}^{t_1} \frac{x_1^2+x_2^2-4}{(1-x_1^2-x_2^2)^{\frac{5}{2}}} (x_2\dot{x_1}-x_1\dot{x_2}) \mathrm{d}t .\]
\end{theorem}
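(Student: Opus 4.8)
The plan is to exhibit the area functional as a double integral over $\Omega$ and then push it to the boundary with Green's theorem. By the preceding theorem the Holmes--Thompson area element is $\sigma_{HT}(x)=\dfrac{1+r^2-\frac18 r^4}{(1-r^2)^{7/2}}$ with $r^2=x_1^2+x_2^2$, so
\[
A_{HT}=\iint_\Omega \sigma_{HT}(x)\,dx_1\,dx_2 .
\]
To convert this to a line integral I need a vector field $(P,Q)$ on $B^2(1)$ satisfying $\partial_{x_1}Q-\partial_{x_2}P=\sigma_{HT}$; then, for the positively oriented boundary, Green's theorem gives $A_{HT}=\oint_{\partial\Omega}(P\,dx_1+Q\,dx_2)$, and reading off $P,Q$ from the target formula reduces everything to verifying the right primitive.

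Because $\sigma_{HT}$ is a function of $r$ alone, I would search for a primitive with the rotational structure suggested by the factor $x_2\,dx_1-x_1\,dx_2$ in the claim, namely $P=g(r)\,x_2$ and $Q=-g(r)\,x_1$ for an unknown radial function $g$. A direct computation using $\partial_{x_i}r=x_i/r$ collapses the planar condition to a single ordinary differential equation,
\[
\partial_{x_1}Q-\partial_{x_2}P=-r\,g'(r)-2g(r)=\sigma_{HT}(r),
\]
equivalently $(r^2 g)'=-r\,\sigma_{HT}(r)$, which is linear of first order.

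Integrating the right-hand side via the substitution $w=1-r^2$ (which turns the integrand into half-integer powers of $w$) and simplifying yields $g(r)=\dfrac{r^2-4}{8(1-r^2)^{5/2}}$. Substituting this back gives
\[
P\,dx_1+Q\,dx_2=\frac18\,\frac{x_1^2+x_2^2-4}{(1-x_1^2-x_2^2)^{5/2}}\,(x_2\,dx_1-x_1\,dx_2),
\]
and parametrizing the boundary by $t\in[t_0,t_1]$ reproduces exactly the asserted expression for $A_{HT}$.

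The step that really needs care is the choice of the constant of integration. The general solution of $(r^2 g)'=-r\,\sigma_{HT}$ carries a free constant $C$, which contributes a term behaving like $C/r^2$ to $g$; geometrically this is a multiple of the closed-but-not-exact angle form $(x_2\,dx_1-x_1\,dx_2)/r^2$. Only $C=0$ keeps $g$, and hence $(P,Q)$, regular at the origin and therefore smooth on all of $B^2(1)$, which is precisely what permits Green's theorem to be applied to an arbitrary simple closed curve---including one enclosing the origin---without puncturing the domain. Checking the identity $-rg'-2g=\sigma_{HT}$ together with this regularity is the crux; the remaining manipulations are routine.
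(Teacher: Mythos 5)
Your proposal is correct and follows essentially the same route as the paper: both express $A_{HT}$ as the double integral of the area element $\sigma_{HT}$ and apply Green's theorem with the identical vector field $P=g(r)x_2$, $Q=-g(r)x_1$, $g(r)=\frac{r^2-4}{8(1-r^2)^{5/2}}$. The only difference is presentational --- the paper states $(P,Q)$ and verifies $\partial_{x_1}Q-\partial_{x_2}P=\sigma_{HT}$ directly, whereas you derive $g$ by solving $(r^2g)'=-r\,\sigma_{HT}$ and correctly note that the integration constant must vanish for regularity at the origin.
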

\begin{proof}
From above theorem, the Holmes-Thompson area enclosed by $c(t)$ is given by 
\[A_{HT}=\iint_{\Omega} \frac{1+(x_1^2+x_2^2)-\frac{1}{8}(x_1^2+x_2^2)^2}{(1-x_1^2-x_2^2)^{\frac{7}{2}}} \mathrm{d}x_1\mathrm{d}x_2.\]
Let $P(x_1,x_2)=\frac{x_2(x_1^2+x_2^2-4)}{8(1-x_1^2-x_2^2)^{\frac{5}{2}}}$ and $Q(x_1,x_2)=-\frac{x_1(x_1^2+x_2^2-4)}{8(1-x_1^2-x_2^2)^{\frac{5}{2}}}$, then
\[\frac{\partial Q}{\partial x_1}-\frac{\partial P}{\partial x_2}=\frac{1+(x_1^2+x_2^2)-\frac{1}{8}(x_1^2+x_2^2)^2}{(1-x_1^2-x_2^2)^{\frac{7}{2}}}.\]
Thus the Green formula implies the result.  
\end{proof}

\section{\textbf{The Euler-Langrange equation of the isoperimetric problem in the 2-dimensional Finsler space forms with k = 0}}
Now we consider an abitrary smooth closed curve $c(t)=(x_1(t),x_2(t)), t\in[t_0,t_1], c(t_0)=c(t_1)$. While its circumference $L=\int_{t_0}^{t_1} g(x_1,x_2,\dot{x_1},\dot{x_2}) dt$ is given, we want to figure out when its area $A_{HT}=\int_{t_0}^{t_1} f(x_1,x_2,\dot{x_1},\dot{x_2}) \mathrm{d}t$ will reach its maximum, and what the maximum curve $c(t)$ is. For obtaining the circumference and area, we use the following integrand \begin{equation}\label{eqn14} g(x_1,x_2,\dot{x_1},\dot{x_2}) =\frac{(1-x_1^2-x_2^2)(\dot{x_1}^2+\dot{x_2}^2)+2(x_1\dot{x_1}+x_2\dot{x_2})^2}{(1-x_1^2-x_2^2)^2\sqrt{(1-x_1^2-x_2^2)(\dot{x_1}^2+\dot{x_2}^2)+(x_1\dot{x_1}+x_2\dot{x_2})^2}}, \end{equation}
\begin{equation}\label{eqn15}  f(x_1,x_2,\dot{x_1},\dot{x_2})=\frac{x_1^2+x_2^2-4}{8(1-x_1^2-x_2^2)^{\frac{5}{2}}} (x_2\dot{x_1}-x_1\dot{x_2}). \end{equation}
Resorting to Lagrange multiplier method, let\begin{equation}\label{eqn16}  J=A_{HT}+\lambda L=\int_{t_0}^{t_1} f(x_1,x_2,\dot{x_1},\dot{x_2})+\lambda g(x_1,x_2,\dot{x_1},\dot{x_2}) \mathrm{d}t=\int_{t_0}^{t_1} h (x_1,x_2,\dot{x_1},\dot{x_2},\lambda) \mathrm{d}t.\end{equation}
The Euler-Lagrange equations of $J$ are \begin{equation}\label{eqn17}  \begin{cases} \frac{\partial h}{\partial x_1}- \frac{\mathrm{d}}{\mathrm{d} t} \frac{\partial h}{\partial \dot{x_1}} =0 \\ \frac{\partial h}{\partial x_2}- \frac{\mathrm{d}}{\mathrm{d} t} \frac{\partial h}{\partial \dot{x_2}} =0 .\end{cases}\end{equation}

Again, we adopt the polar coordinate to reduce the equation$\gamma(t)=(r(t) \cos t, r(t) \sin t)$, 
š\begin{equation}\label{eqn18}  \begin{cases} \dot{x_1}=\dot{r}\cos t-r\sin t \\ \dot{x_2}=\dot{r}\sin t+r\cos t .\end{cases}\end{equation}
A further calculation reveals \begin{equation}\label{eqn19}  \begin{cases} x_1^2+x_2^2=r^2 \\ \dot{x_1}^2+\dot{x_2}^2=r^2+\dot{r}^2 \\ x_1\dot{x_1}+x_2\dot{x_2}=r\dot{r} \\ x_2\dot{x_1}-x_1\dot{x_2}=-r^2. \end{cases}\end{equation}

Substituting these into (\ref{eqn14}), (\ref{eqn15}) will get:\begin{equation}\label{eqn20} g(r,\dot{r})= \frac{r^2(1-r^2)+(1+r^2)\dot{r}^2}{(1-r^2)^2\sqrt{r^2(1-r^2)+\dot{r}^2}} ,\end{equation}\begin{equation}\label{eqn21} f(r)= \frac{r^2(4-r^2)}{8(1-r^2)^{\frac{5}{2}}}. \end{equation}

Therefore we have
\begin{equation}\label{eqn22}\begin{cases} \frac{\partial h}{\partial r} = \frac{r(1+r^2-\frac{1}{8}r^4)}{(1-r^2)^{\frac{7}{2}}} -\lambda \frac{r\left( r^2-3r^6+2r^8+2\dot{r}^4(3+r^2)+\dot{r}^2(1+6r^2-3r^4-4r^6) \right)}{(r^2-1)^3(\dot{r}^2+r^2-r^4)^{\frac{3}{2}}} \\

\frac{\partial h}{\partial \dot{r}}=\lambda \frac{\dot{r}\left( r^2+r^4-2r^6+\dot{r}^2(1+r^2) \right)}{(r^2-1)^2(\dot{r}^2+r^2-r^4)^{\frac{3}{2}}}.
\end{cases}  \end{equation}

Plugging above calculations into the Euler-Lagrange equation $\frac{\partial h}{\partial r}- \frac{\mathrm{d}}{\mathrm{d} t} \frac{\partial h}{\partial \dot{r}} =0$ and integrating the equation,  we arrive at the following proposition:
\begin{proposition}
For a curve $\gamma(t)=(r(t)\cos t,r(t)\sin t)$ which is a solution of the isoperimetric problem in $(F_B,B^2(1))$ when using the Holmes-Thompson area, then it must satisfy the equation:
\begin{equation}\label{eqn23}
\frac{r^2(4-r^2)}{8(1-r^2)^{\frac{5}{2}}}  + \lambda \frac{r^2(\dot{r}^2+r^2)}{(\dot{r}^2+r^2-r^4)^{\frac{3}{2}}}=C,
\end{equation}
where $C$ is a constant. 
\end{proposition}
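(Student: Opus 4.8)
The plan is to exploit the structural fact that, after passing to the polar ansatz $\gamma(t)=(r(t)\cos t,\,r(t)\sin t)$, the reduced integrand $h(r,\dot r)=f(r)+\lambda g(r,\dot r)$ recorded in (\ref{eqn20})--(\ref{eqn21}) carries no explicit dependence on the parameter $t$: the spherical symmetry of $F_B$ ensures that all the $\cos t,\sin t$ contributions cancel, leaving a genuinely autonomous Lagrangian in $r$ and $\dot r$ alone. For such a Lagrangian the Euler--Lagrange equation always possesses a first integral of energy (Beltrami) type, and the claimed identity (\ref{eqn23}) is precisely this conserved quantity. Rather than trying to integrate the second-order equation $\frac{\partial h}{\partial r}-\frac{d}{dt}\frac{\partial h}{\partial\dot r}=0$ head-on, I would instead produce the first integral directly.

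First I would multiply the reduced Euler--Lagrange equation by $\dot r$ and observe the telescoping identity
\[\frac{d}{dt}\Big(h-\dot r\,\tfrac{\partial h}{\partial\dot r}\Big)=\dot r\,\tfrac{\partial h}{\partial r}+\ddot r\,\tfrac{\partial h}{\partial\dot r}-\ddot r\,\tfrac{\partial h}{\partial\dot r}-\dot r\,\tfrac{d}{dt}\tfrac{\partial h}{\partial\dot r}=\dot r\Big(\tfrac{\partial h}{\partial r}-\tfrac{d}{dt}\tfrac{\partial h}{\partial\dot r}\Big)=0,\]
which is valid precisely because $\partial h/\partial t\equiv 0$. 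Consequently $h-\dot r\,\partial h/\partial\dot r=C$ along any extremal, for some constant $C$; this is the equation we must identify with (\ref{eqn23}).

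It then remains to evaluate $h-\dot r\,\partial h/\partial\dot r$ and match it term by term. Since $f=f(r)$ is independent of $\dot r$, it contributes its full value $f(r)=\frac{r^2(4-r^2)}{8(1-r^2)^{5/2}}$, which is exactly the first term of (\ref{eqn23}). The substantive task is to show that $\lambda\big(g-\dot r\,g_{\dot r}\big)$ equals $\lambda\frac{r^2(\dot r^2+r^2)}{(\dot r^2+r^2-r^4)^{3/2}}$. Writing $D:=\dot r^2+r^2-r^4$ and using the expression for $\partial h/\partial\dot r=\lambda\,\partial g/\partial\dot r$ already recorded in (\ref{eqn22}), I would bring $g$ from (\ref{eqn20}) and $\dot r\,g_{\dot r}$ over the common denominator $(1-r^2)^2 D^{3/2}$; the numerator should then simplify to $r^2(1-r^2)^2(r^2+\dot r^2)$, after which the surplus factor $(1-r^2)^2$ cancels and the claimed form emerges.

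The only real obstacle is this final algebraic collapse: a priori the numerator of $g-\dot r\,g_{\dot r}$ is an unwieldy polynomial in $r$ and $\dot r$, and one must verify the clean factorization. I expect it to fall out cleanly by setting $a:=r^2(1-r^2)$ and $b:=\dot r^2$, so that $g$ has numerator $a+b(1+r^2)$ and $D=a+b$; expanding $(a+b(1+r^2))(a+2b)-2(1+r^2)b(a+b)$ causes the $b$ and $br^2$ cross terms to cancel, leaving $a\big(a+b(1-r^2)\big)=r^2(1-r^2)\cdot(1-r^2)(r^2+\dot r^2)=r^2(1-r^2)^2(r^2+\dot r^2)$. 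Once this identity is in hand, combining it with the contribution of $f$ yields (\ref{eqn23}) immediately.
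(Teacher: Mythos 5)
Your proof is correct and follows essentially the same route as the paper: the paper likewise obtains (\ref{eqn23}) by integrating the reduced Euler--Lagrange equation $\frac{\partial h}{\partial r}-\frac{d}{dt}\frac{\partial h}{\partial \dot r}=0$ for the autonomous Lagrangian $h=f+\lambda g$ of (\ref{eqn20})--(\ref{eqn21}), which is exactly the Beltrami first integral $h-\dot r\,\partial h/\partial\dot r=C$ that you write down. Your explicit check that $g-\dot r\,g_{\dot r}=\frac{r^2(\dot r^2+r^2)}{(\dot r^2+r^2-r^4)^{3/2}}$ (via $a=r^2(1-r^2)$, $b=\dot r^2$) is accurate and usefully fills in the algebra the paper compresses into ``integrating the equation.''
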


It is easy to see that $r=constant$ is one of the solutions. 

\begin{theorem}
The circles $c_0=(a\cos(t), a\sin(t))$ centered at the origin, where $a\in (0,1)$ and $t\in [0, 2\pi]$, are isoperimetric extremal with respect to the integral $J$. Moreover, 
$$\lambda_{0}=-\frac{a(1+a^2-\frac{1}{8}a^4)}{1+a^2-2a^4}<0.$$
\end{theorem}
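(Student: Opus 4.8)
The plan is to substitute the circle directly into the Euler--Lagrange equation in its reduced polar form and simply read off the multiplier. First I would observe that the circle $c_0=(a\cos t,a\sin t)$ is exactly the curve $\gamma(t)=(r(t)\cos t,r(t)\sin t)$ with $r(t)\equiv a$, so that $\dot r\equiv 0$ and $\ddot r\equiv 0$ all along $c_0$. Since $f$ in \eqref{eqn21} depends on $r$ alone, the only contribution to $\partial h/\partial\dot r$ comes from the $\lambda g$ term, and by \eqref{eqn22} this expression carries an overall factor $\dot r$. Hence $\partial h/\partial\dot r$ vanishes \emph{identically} along $c_0$, so $\frac{d}{dt}\,\partial h/\partial\dot r=0$ there, and the Euler--Lagrange equation \eqref{eqn17} collapses to the single scalar condition
\[
\left.\frac{\partial h}{\partial r}\right|_{r=a,\;\dot r=0}=0 .
\]

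Next I would evaluate $\partial h/\partial r$ from \eqref{eqn22} at $r=a$, $\dot r=0$. The $\dot r^2$ and $\dot r^4$ terms drop out, and after simplifying the denominators via $(a^2-1)^3=-(1-a^2)^3$ and $(a^2-a^4)^{3/2}=a^3(1-a^2)^{3/2}$ the condition becomes
\[
\frac{a\bigl(1+a^2-\tfrac18 a^4\bigr)}{(1-a^2)^{7/2}}
+\lambda\,\frac{1-3a^4+2a^6}{(1-a^2)^{9/2}}=0 .
\]
Solving this linear equation for $\lambda$ gives $\lambda_0=-a\bigl(1+a^2-\tfrac18a^4\bigr)(1-a^2)/(1-3a^4+2a^6)$; the stated closed form then follows from the factorization $1-3a^4+2a^6=(1-a^2)(1+a^2-2a^4)$, which cancels the leftover factor $(1-a^2)$. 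This single computation shows simultaneously that $c_0$ satisfies \eqref{eqn17} for this value of $\lambda$, i.e. it is an isoperimetric extremal, and that it is consistent with the first integral \eqref{eqn23} (into which $r=a$, $\dot r=0$ may be substituted merely as a check, since a constant $r$ satisfies \eqref{eqn23} for any choice of $C$).

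Finally, for the sign I would use the further factorization $1+a^2-2a^4=(1-a^2)(1+2a^2)$. For $a\in(0,1)$ each factor $a$, $1+a^2-\tfrac18a^4$, $1-a^2$ and $1+2a^2$ is strictly positive, so the numerator and the denominator of $\lambda_0$ are both positive and therefore $\lambda_0<0$.

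I expect no serious conceptual obstacle; the one point genuinely needing care is the observation that $\partial h/\partial\dot r$ vanishes identically (not merely pointwise) along $c_0$, which is precisely what legitimately annihilates the $\frac{d}{dt}\,\partial h/\partial\dot r$ term and reduces the second-order Euler--Lagrange equation to a single algebraic equation for $\lambda$. Everything remaining is the bookkeeping of the rational-function algebra and the two polynomial factorizations.
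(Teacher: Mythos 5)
Your proposal is correct and follows essentially the same route as the paper: the paper's proof simply says to plug $c_0$ into the Euler--Lagrange equation and read off $\lambda_0$, which is exactly the computation you carry out (in the reduced polar form of (\ref{eqn22}), where $\dot r\equiv 0$ kills the $\frac{d}{dt}\partial h/\partial\dot r$ term). Your factorizations $1-3a^4+2a^6=(1-a^2)(1+a^2-2a^4)$ and $1+a^2-2a^4=(1-a^2)(1+2a^2)$ check out, so both the closed form and the negativity of $\lambda_0$ are verified.
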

\begin{proof}
The curves $c_0=(a\cos(t), a\sin(t))$ are obviously the solutions of the Euler-Lagrange Equations of $J$. Plugging $c_0=(a\cos(t), a\sin(t))$ into the equation (\ref{eqn17}) will obtain $$\lambda_{0}=-\frac{a(1+a^2-\frac{1}{8}a^4)}{1+a^2-2a^4}.$$
For $a \in (0,1)$, $\lambda_{0}$ is negative.
\end{proof}

\begin{theorem}
The circles $c_0=(a\cos(t), a\sin(t))$ centered at the origin  are normal, where $a\in (0,1)$ and $t \in [0, 2\pi]$. 
\end{theorem}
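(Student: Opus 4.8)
The definition of normality in Theorem 2.4 specializes, for the single isoperimetric constraint $L$, to the requirement that the pair
$$P_1=g_{x_1}-\frac{d}{dt}g_{\dot{x}_1},\qquad P_2=g_{x_2}-\frac{d}{dt}g_{\dot{x}_2}$$
does not vanish identically along $c_0$; equivalently $(P_1,P_2)\not\equiv(0,0)$ as functions of $t$. Differentiating the unwieldy integrand $g$ of \eqref{eqn14} directly is possible but tedious, so the plan is to bypass it entirely by exploiting the Euler--Lagrange equation already established for $c_0$.

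Since $c_0$ is an isoperimetric extremal of $J=A_{HT}+\lambda_0 L$ with $h=f+\lambda_0 g$, the Euler--Lagrange equations \eqref{eqn17} read
$$\Bigl(f_{x_i}-\frac{d}{dt}f_{\dot{x}_i}\Bigr)+\lambda_0\,P_i=0\qquad(i=1,2)$$
along $c_0$. By the preceding theorem $\lambda_0<0$, in particular $\lambda_0\neq0$, so $P_i=-\lambda_0^{-1}\bigl(f_{x_i}-\tfrac{d}{dt}f_{\dot{x}_i}\bigr)$. Hence normality is equivalent to showing that the Euler--Lagrange operator of the area integrand $f$ \emph{alone} does not vanish identically on $c_0$, and $f$ is far simpler to handle than $g$.

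For this I would use that $f$ in \eqref{eqn15} is linear in the velocities: writing $f=P\dot{x}_1+Q\dot{x}_2$ with $P,Q$ the functions introduced in the proof of the line-integral formula for $A_{HT}$, a short computation gives
$$f_{x_1}-\frac{d}{dt}f_{\dot{x}_1}=\Bigl(\frac{\partial Q}{\partial x_1}-\frac{\partial P}{\partial x_2}\Bigr)\dot{x}_2,\qquad f_{x_2}-\frac{d}{dt}f_{\dot{x}_2}=-\Bigl(\frac{\partial Q}{\partial x_1}-\frac{\partial P}{\partial x_2}\Bigr)\dot{x}_1,$$
and in that same proof it was shown that $\frac{\partial Q}{\partial x_1}-\frac{\partial P}{\partial x_2}=\sigma_{HT}(r)$. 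Evaluating on $c_0=(a\cos t,a\sin t)$, where $\dot{x}_1=-a\sin t$, $\dot{x}_2=a\cos t$ and $r=a$, this pair becomes $\sigma_{HT}(a)\,a(\cos t,\sin t)$, whose Euclidean length is $\sigma_{HT}(a)\,a$. Since $\sigma_{HT}(a)=\frac{1+a^2-\frac18 a^4}{(1-a^2)^{7/2}}>0$ for $a\in(0,1)$, this vector never vanishes, and therefore neither does $(P_1,P_2)=-\lambda_0^{-1}\sigma_{HT}(a)\,a(\cos t,\sin t)$. This proves that $c_0$ is normal.

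The computation here is light; the only real idea is to trade the direct differentiation of $g$ for the Euler--Lagrange identity, after which everything collapses to the two facts already in hand, $\lambda_0\neq0$ and $\sigma_{HT}(a)>0$. The single point requiring care is the notational overlap between the functions $P,Q$ used in the area-reduction proof and the operator $P_i$ appearing in the normality condition. As an independent sanity check one could instead recompute $P_i$ from $g$ in the polar reduction \eqref{eqn18}, but that route is substantially heavier and yields no additional insight.
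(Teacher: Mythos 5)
Your proposal is correct, and it reaches the conclusion by a genuinely different route than the paper. The paper simply differentiates the length integrand $g$ directly along $c_0$ and reports
\[
P_1=\frac{1+2a^2}{(1-a^2)^{5/2}}\cos t,\qquad P_2=\frac{1+2a^2}{(1-a^2)^{5/2}}\sin t,
\]
whereas you avoid touching $g$ altogether: you invoke the already-established extremality of $c_0$ with multiplier $\lambda_0\neq 0$ to write $P_i=-\lambda_0^{-1}\bigl(f_{x_i}-\tfrac{d}{dt}f_{\dot x_i}\bigr)$, and then exploit the fact that $f=P\dot x_1+Q\dot x_2$ is linear in the velocities, so its Euler--Lagrange operator collapses to the curl $\tfrac{\partial Q}{\partial x_1}-\tfrac{\partial P}{\partial x_2}=\sigma_{HT}$ times the rotated velocity. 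Your dependence on the preceding theorem is legitimate (it is proved before the normality statement, so there is no circularity), and your answer is consistent with the paper's: since $1+a^2-2a^4=(1-a^2)(1+2a^2)$, one checks that $-\lambda_0^{-1}\sigma_{HT}(a)\,a=\frac{1+2a^2}{(1-a^2)^{5/2}}$, exactly the coefficient the paper obtains. What your approach buys is a much lighter computation and a conceptual explanation of why normality holds (positivity of the area density $\sigma_{HT}$ plus $\lambda_0\neq 0$); what the paper's direct computation buys is logical independence from the extremality theorem, since normality as defined is a property of $g$ alone and in principle should not require knowing $\lambda_0$.
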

\begin{proof}
Along the circles $c_0$, it can be calculated that
  \[P_1=g_{x_1}-\frac{d g_{\dot{x}_1}}{dt}=\frac{(1+2a^2)}{(1-a^2)^{\frac{5}{2}}}\cos(t),\]
  \[P_2=g_{x_2}-\frac{d g_{\dot{x}_2}}{dt}=\frac{(1+2a^2)}{(1-a^2)^{\frac{5}{2}}}\sin(t).\]
Therefore $P_1$ and $P_2$ are not $0$ function and the circles $c_0$ are normal.  
\end{proof}

\section{\textbf{Weierstrass E-function}}
\begin{definition}
The Weierstrass function of the integral $J$ is defined as
 \[E(x_1,x_2,\dot{x_1},\dot{x_2},p_1,p_2):=h(x_1,x_2,p_1,p_2)-h(x_1,x_2,\dot{x_1},\dot{x_2})\]
\[-(p_1-\dot{x_1})\frac{\partial h(x_1,x_2,\dot{x}_1,\dot{x}_2)}{\partial \dot{x}_1}-(p_2-\dot{x_2})\frac{\partial h(x_1,x_2,\dot{x}_1,\dot{x}_2)}{\partial \dot{x}_2}.\]

\end{definition}

\begin{proposition}
If assuming $\lambda<0$, then the Weierstrass E-function $E$ of $J$  satisfies: $$E<0$$
except for $(p_1,p_2)\neq k(\dot{x_1},\dot{x_2}), (k>0)$.
\begin{proof}
If let $$A(x_1,x_2)=\sqrt{1-x_1^2-x_2^2},$$ $$\alpha(x_1,x_2,\dot{x_1},\dot{x_2})=\sqrt{A^2(\dot{x_1}^2+\dot{x_2}^2)+(x_1\dot{x_1}+x_2\dot{x_2})^2},$$ and $\beta(x_1,x_2,\dot{x_1},\dot{x_2})=x_1\dot{x_1}+x_2\dot{x_2}$, thus
 \begin{equation}\label{eqn25} f(x_1,x_2,\dot{x_1},\dot{x_2})=\frac{A^2+3}{8A^5}(x_1\dot{x_2}-x_2\dot{x_1})
 \end{equation}
and \begin{equation}\label{eqn26} g(x_1,x_2,\dot{x_1},\dot{x_2})=\frac{\alpha^2+\beta^2}{A^4\alpha}.\end{equation}
Furthermore, one can calculate \begin{equation}\label{eqn27} \frac{\partial{\alpha}}{\partial \dot{x_1}}= \frac{1}{\alpha}(A^2\dot{x_1}+\beta x_1), \frac{\partial{\alpha}}{\partial \dot{x_2}}= \frac{1}{\alpha}(A^2\dot{x_2}+\beta x_2).\end{equation}
A bit of effort makes the following expression 
\begin{equation}\label{eqn28}  \begin{aligned} & \frac{\partial h}{\partial \dot{x_1}}  = \frac{\partial f}{\partial \dot{x_1}} +\lambda  \frac{\partial g}{\partial \dot{x_1}} =-x_2 \frac{A^2+3}{8A^5}  + \frac{ \lambda }{\alpha^3 A^4}\left(A^2\dot{x_1}( \alpha^2 - \beta^2) + \beta x_1 (3 \alpha^2-\beta^2) \right) ,\\ 
& \frac{\partial h}{\partial \dot{x_2}} = x_1\frac{A^2+3}{8A^5}  + \frac{ \lambda }{\alpha^3 A^4}\left(A^2\dot{x_2}( \alpha^2 - \beta^2) + \beta x_2 (3 \alpha^2-\beta^2) \right) .\end{aligned} \end{equation}
By taking the definition of E-function, we have \begin{equation}\label{eqn29} \begin{aligned}
E =& f(x_1,x_2,p_1,p_2) + \lambda g(x_1,x_2,p_1,p_2) - f(x_1,x_2,\dot{x_1},\dot{x_2}) - \lambda g(x_1,x_2,\dot{x_1},\dot{x_2})\\
&- (p_1-\dot{x_1}) ( \frac{\partial f}{\partial \dot{x_1}} +\lambda  \frac{\partial g}{\partial \dot{x_1}}) )- (p_2-\dot{x_2}) ( \frac{\partial f}{\partial \dot{x_2}} +\lambda  \frac{\partial g}{\partial \dot{x_2}}) \\
=&\frac{A^2+3}{8A^5}(x_1p_2-x_2p_1)+\lambda g(x_1,x_2,p_1,p_2) - \frac{A^2+3}{8A^5}(x_1\dot{x_2}-x_2\dot{x_1}) - \lambda g(x_1,x_2,\dot{x_1},\dot{x_2}) \\
& -(\dot{x_1}-p_1) x_2 \frac{A^2+3}{8A^5} +\lambda (\dot{x_1}-p_1)  \frac{\partial g}{\partial \dot{x_1}}  +(\dot{x_2}-p_2) x_1 \frac{A^2+3}{8A^5} + \lambda (\dot{x_2}-p_2)  \frac{\partial g}{\partial \dot{x_2}} \\
=& \lambda \left( g(x_1,x_2,p_1,p_2)-g(x_1,x_2,\dot{x_1},\dot{x_2})+(\dot{x_1}-p_1) \frac{\partial g}{\partial \dot{x_1}} + (\dot{x_2}-p_2) \frac{\partial g}{\partial \dot{x_2}} \right).
\end{aligned}\end{equation}
Since $E$ does not involve $f$,  the Weierstrass E-function $E$ is independent of the explicit area form. By following the steps in \cite{Zh2}, one will get the result.
\end{proof}
\end{proposition}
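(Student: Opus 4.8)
The plan is to exploit the additive structure $h=f+\lambda g$ together with the fact that the Weierstrass excess is linear in the integrand, so that
\[
E=E_f+\lambda E_g,
\]
where $E_f$ and $E_g$ denote the excess functions built from $f$ and $g$ respectively. The crucial observation is that the area integrand in (\ref{eqn25}), namely $f=\frac{A^2+3}{8A^5}(x_1\dot{x}_2-x_2\dot{x}_1)$, is \emph{linear} in the velocities $(\dot{x}_1,\dot{x}_2)$. For any integrand affine in $\dot{x}$ the derivatives $f_{\dot{x}_i}$ are independent of $\dot{x}$, and a one-line cancellation gives
\[
E_f=f(x,p)-f(x,\dot{x})-(p_1-\dot{x}_1)f_{\dot{x}_1}-(p_2-\dot{x}_2)f_{\dot{x}_2}\equiv 0 .
\]
Hence $E=\lambda E_g$, so that $E$ depends only on the length integrand $g$ and on $\lambda$, and not at all on the choice of area element. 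This is what makes the Holmes--Thompson computation parallel to the Busemann--Hausdorff one.

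Since $\lambda<0$, the sign of $E$ is opposite to that of $E_g$, so the proposition reduces to showing $E_g>0$ for every admissible $(p_1,p_2)$ with $(p_1,p_2)\neq k(\dot{x}_1,\dot{x}_2)$, $k>0$. First I would record, using Euler's relation for the degree-one homogeneity of $g$ in $\dot{x}$ (that is, $g=\dot{x}_1 g_{\dot{x}_1}+\dot{x}_2 g_{\dot{x}_2}$), the support-function form of the excess:
\[
E_g=g(x,p)-p_1\,g_{\dot{x}_1}(x,\dot{x})-p_2\,g_{\dot{x}_2}(x,\dot{x}).
\]
Positivity of $E_g$ off the positive ray is then exactly the strict support inequality for the convex indicatrix of the Finsler norm $g(x,\cdot)$.

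To establish that strict inequality, I would invoke that $g$ is the length integrand of the genuine Finsler metric $F_B$ on $B^2(1)$, whose fundamental tensor $g_{ij}$ is positive definite. Positive definiteness of $g_{ij}$ is equivalent to strict convexity of the indicatrix, which yields $g(x,p)\ge p_1 g_{\dot{x}_1}(x,\dot{x})+p_2 g_{\dot{x}_2}(x,\dot{x})$ with equality if and only if $p$ is a positive multiple of $\dot{x}$. Therefore $E_g>0$ precisely off the positive ray, and $E=\lambda E_g<0$ there. Because the length integrand here coincides with the one used for the Busemann--Hausdorff problem in \cite{Zh2} --- the length of a curve does not depend on the area convention --- an equivalent route is simply to transcribe the explicit sign analysis of $E_g$ performed there.

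The hard part will be the positivity $E_g>0$: one must verify the strong convexity of $F_B$ (equivalently, compute and sign the fundamental tensor of the explicit metric) and then treat the borderline direction carefully, confirming that equality in the support inequality holds \emph{only} along the positive ray $p=k\dot{x}$, $k>0$. Everything upstream --- the additivity $E=E_f+\lambda E_g$ and the vanishing of $E_f$ --- is routine once the linearity of $f$ in $\dot{x}$ is noted.
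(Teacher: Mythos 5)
Your reduction is the same as the paper's, just packaged more conceptually: the paper expands the excess function term by term and watches every $f$-contribution cancel, arriving at $E=\lambda\bigl(g(x,p)-g(x,\dot x)+(\dot x_1-p_1)g_{\dot x_1}+(\dot x_2-p_2)g_{\dot x_2}\bigr)$, which is exactly your $E=\lambda E_g$ with $E_f\equiv 0$ because $f$ is linear in the velocities. Where you diverge is the last step: the paper disposes of the sign of $E_g$ entirely by citing \cite{Zh2} (``by following the steps in \cite{Zh2}, one will get the result''), whereas you propose a self-contained argument via Euler's relation and the strict support inequality for the indicatrix, keeping the citation only as a fallback. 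That route does work and is arguably more illuminating, but it needs one repair: the integrand $g=\frac{\alpha^2+\beta^2}{A^4\alpha}$ is \emph{not} the Finsler metric $F_B=\frac{(\alpha+\beta)^2}{A^4\alpha}$; the two differ by $\frac{2\beta}{A^4}=\frac{d}{dt}\frac{1}{1-|x|^2}$, a term linear in $\dot x$ (a null Lagrangian whose own excess vanishes). So you cannot literally invoke ``the convex indicatrix of the Finsler norm $g(x,\cdot)$''; instead you should observe that $E_g=E_{F_B}$ because the linear difference contributes nothing to the excess, and then apply the fundamental inequality (strong convexity) to $F_B$ itself, with equality precisely on the positive ray $p=k\dot x$. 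With that one-line correction your argument is complete and, unlike the paper's, does not outsource the positivity of $E_g$ to the companion paper.
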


From the proposition, we have the following theorem.
 \begin{theorem}
 Let $c_0$ be the circle centered at the origin in $B^2(1)$. For each $(x, \dot{x})$ in a neighborhood of $c_0$, the Weierstrass function
 \[E(x,\dot{x},u)<0\]
 holds for every admissible set $(x,u)\neq (x, k\dot{x}) (k>0)$.
 \end{theorem}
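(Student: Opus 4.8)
The plan is to read off this theorem as an immediate corollary of two results already established, so that essentially no new computation is required. The first ingredient is the earlier theorem stating that the circle $c_0=(a\cos t,a\sin t)$ is an isoperimetric extremal for $J$ whose Lagrange multiplier
\[
\lambda_0=-\frac{a\left(1+a^2-\tfrac{1}{8}a^4\right)}{1+a^2-2a^4}
\]
is strictly negative for every $a\in(0,1)$. The second ingredient is the preceding proposition, which shows that as soon as the multiplier obeys $\lambda<0$, the Weierstrass function satisfies $E<0$ for every admissible direction $u\neq k\dot{x}$ with $k>0$.

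The argument is then simply to combine these. Along $c_0$ and throughout any sufficiently small neighborhood of it, the multiplier is frozen at the value $\lambda_0$, which the first ingredient guarantees is negative; feeding $\lambda=\lambda_0<0$ into the conclusion of the proposition produces exactly the desired inequality $E(x,\dot{x},u)<0$ for all admissible $(x,u)$ with $u\neq k\dot{x}$, $k>0$.

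The only point I would check explicitly is that the proposition's conclusion is genuinely pointwise and hence survives passage to a whole neighborhood of $c_0$. This is clear from the identity derived there,
\[
E=\lambda\Bigl(g(x,p)-g(x,\dot{x})+(\dot{x}_1-p_1)g_{\dot{x}_1}+(\dot{x}_2-p_2)g_{\dot{x}_2}\Bigr),
\]
in which the bracket is precisely the Weierstrass excess function of the length integrand $g$ alone. Since $g$ is strongly convex in the velocity variable, this excess function is positive whenever $u\neq k\dot{x}$, and it is defined and positive at every $(x,\dot{x})$ with $|x|<1$ and $\dot{x}\neq 0$; such points fill a neighborhood of $c_0$ inside $B^2(1)$ because $a<1$ and $\dot{x}\neq 0$ along the circle. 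Multiplying the positive bracket by the negative $\lambda_0$ reverses the sign, giving $E<0$ on the entire neighborhood. I do not anticipate any genuine obstacle, since all of the analytic content was already absorbed into the proposition, the relevant convexity estimate being the one carried out following the method of \cite{Zh2}.
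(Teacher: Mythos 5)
Your proposal is correct and follows essentially the same route as the paper, which states the theorem as an immediate consequence of the preceding proposition (reducing $E$ to $\lambda$ times the Weierstrass excess of $g$, positive by the convexity argument of \cite{Zh2}) combined with the fact that $\lambda_0<0$ along $c_0$. Your additional remark that the pointwise identity persists on a whole neighborhood of $c_0$ is a reasonable explicit check of something the paper leaves implicit.
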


\section{\textbf{The conjugate points of the critical circles}}
For the isoperimetric extremal circles $c_0=(a\cos t,a\sin t), a\in(0,1)$ in $(F_B,B^2(1))$, the integrands of the  Holmes-Thompson area $A_{HT}$ and the length $L$ are given by
$$f(c_0)=\frac{a^2(4-a^2)}{8(1-a^2)^{\frac{5}{2}}}, \quad g(c_0)=\frac{a}{(1-a^2)^{\frac{3}{2}}}.$$
Hence 
$$h(c_0)=f(c_0)+\lambda g(c_0)=\frac{a^2(4-a^2)}{8(1-a^2)^{\frac{5}{2}}}+\lambda\frac{a}{(1-a^2)^{\frac{3}{2}}}.$$
 The Jacobi equation along $c_0$ is 
 \begin{equation}\label{eqn30}	
\Psi(w)+\mu U=0,
\end{equation}
where\begin{equation}\label{eqn31}\begin{cases} \Psi(w)=H_2 w-\frac{\mathrm{d}}{\mathrm{d}t} (H_1 w') ,\\
U=g_{x_1\dot{x}_2}-g_{\dot{x}_1x_2}+g_1(\dot{x}_1\ddot{x}_2-\ddot{x}_1\dot{x}_2) ,\\
g_1=\frac{g_{\dot{x}_1\dot{x}_1}}{\dot{x}_2^2} ,\\
H_1=\frac{h_{\dot{x}_1\dot{x}_1}}{\dot{x}_2^2} ,\\
H_2=\frac{h_{x_1x_1}-\ddot{x}_2^2 H_1-\frac{\mathrm{d}J}{\mathrm{d} t}}{\dot{x}_2^2}, \\
J=h_{x_1\dot{x}_1}- \dot{x}_2\ddot{x}_2 H_1.
 \end{cases}\end{equation}
 Furthermore, $w$ satisfies 
\[\int_{t_0}^t U w\mathrm{d}t=0.\]
Plugging $g$, $h$ and $\lambda$ into above equation will obtain
\[    \frac{a^4-8a^2-8}{8a^2(1-a^2)^{\frac{7}{2}}}\frac{d^2\omega}{dt^2}+\frac{2a^8+13a^6+51a^4+16a^2+8}{8a^2(2a^2+1)(1-a^2)^{\frac{9}{2}}}\omega+\mu \frac{2a^2+1}{a(1-a^2)^{\frac{5}{2}}}=0.\]
It can be turned into 
\[\frac{d^2\omega}{dt^2}+\frac{2a^8+13a^6+51a^4+16a^2+8}{(2a^2+1)(1-a^2)(a^4-8a^2-8)}\omega+\mu \frac{8a(2a^2+1)(1-a^2)}{a^4-8a^2-8}=0.\]

When $a\in (0,1)$, the coefficient $b= \frac{2a^8+13a^6+51a^4+16a^2+8}{(2a^2+1)(1-a^2)(a^4-8a^2-8)}<0$.
If we denote $c=\frac{8a(2a^2+1)(1-a^2)}{a^4-8a^2-8}$, according to the theory of ODE, the solutions of Jacobi equation can be written as \begin{equation*} w=c_1\theta_1(t) + c_2\theta_2(t)+\mu\theta_3(t) \end{equation*}
where $c_1,c_2$ are abitrary constants and
 \begin{equation*} \begin{cases} 
 \theta_1(t) = \sinh \sqrt{-b}t, \\
 \theta_2(t) = \cosh \sqrt{-b}t, \\
  \theta_3(t) =- \frac{c}{b}. \end{cases}\end{equation*}
 
We know that if $t_0, t_1$ are conjugate points along the isoperimetric extremal circles if and only if we can find constants $c_1,c_2,\mu$ such that
\begin{equation}\label{eqn36}\begin{cases}
w(t_0)=c_1\theta_1(t_0) + c_2\theta_2(t_0)+\mu\theta_3(t_0) =0, \\
w(t_1)=c_1\theta_1(t_0) + c_2\theta_2(t_1)+\mu\theta_3(t_1) =0, \\
\int_{t_0}^{t_1} w \mathrm{d}t =c_1 \int_{t_0}^{t_1} U \theta_1\mathrm{d}t + c_2\int_{t_0}^{t_1} U \theta_2 \mathrm{d}t  +\mu \int_{t_0}^{t_1} U \theta_3 \mathrm{d}t  =0.
\end{cases} \end{equation}
Therefore we have \begin{equation}\label{eqn37}D(t_0,t_1) =\begin{vmatrix}
\theta_1(t_0) & \theta_2(t_0) & \theta_3(t_0) \\
\theta_1(t_1) & \theta_2(t_1) & \theta_3(t_1) \\
\int_{t_0}^{t_1} U \theta_1\mathrm{d}t  &\int_{t_0}^{t_1} U \theta_2\mathrm{d}t  & \int_{t_0}^{t_1} U \theta_3\mathrm{d}t 
\end{vmatrix} =0 .\end{equation}
 
It is easy to get the expression of $D(t_0, t_1)$:
\[D(t_0, t_1)=\frac{4cU}{(-b)^{3/2}}\sinh \frac{\sqrt{-b}(t_1-t_0)}{2}[\sinh \frac{\sqrt{-b}(t_1-t_0)}{2} - \frac{\sqrt{-b}(t_1-t_0)}{2}\cosh \frac{\sqrt{-b}(t_1-t_0)}{2} ]\]
where $U=\frac{2a^2+1}{a(1-a^2)^{\frac{5}{2}}}.$
Obviously $D(t_0,t_1)>0$ if $t_1>t_0$, which means there are no conjugate points along $c_0$. Summarising above discussion leads to the following theorem.

\begin{theorem}
Along the isoperimetric extremal circles $c_0=(a\cos(t), a\sin(t))$ of the integral $J$ in $(F_B, B^2(1))$, there are no conjugate points. 
\end{theorem}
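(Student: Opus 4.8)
The plan is to turn the definition of a conjugate point into a solvability condition for the Jacobi field and then show that this condition can never be met for $t_1>t_0$. Recall from the preceding discussion that along $c_0$ the Jacobi equation reduces to the autonomous constant-coefficient equation $w''+bw+\mu c=0$ with $b<0$, whose general solution is $w=c_1\theta_1+c_2\theta_2+\mu\theta_3$ where $\theta_1=\sinh(\sqrt{-b}\,t)$, $\theta_2=\cosh(\sqrt{-b}\,t)$ and $\theta_3=-c/b$ is constant. By definition $t_0$ and $t_1$ are conjugate precisely when there is a nontrivial triple $(c_1,c_2,\mu)$ making $w$ vanish at both endpoints and satisfy the isoperimetric constraint $\int_{t_0}^{t_1}Uw\,\mathrm{d}t=0$; this is exactly the homogeneous system $(\ref{eqn36})$, which has a nontrivial solution if and only if its coefficient determinant $D(t_0,t_1)$ of $(\ref{eqn37})$ vanishes. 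Hence the whole theorem reduces to proving $D(t_0,t_1)\neq 0$ whenever $t_1>t_0$.

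First I would compute $D(t_0,t_1)$ in closed form. The decisive simplification is that $U$ is constant along $c_0$ (one finds $U=\tfrac{2a^2+1}{a(1-a^2)^{5/2}}$), so the bottom-row entries $\int_{t_0}^{t_1}U\theta_i\,\mathrm{d}t$ are elementary: integrating $\sinh$ and $\cosh$ produces a factor $(\sqrt{-b})^{-1}$, while the third column contributes $U\theta_3(t_1-t_0)$. I would then factor $\theta_3=c/(-b)$ out of the third column, expand the $3\times3$ determinant along that column, and collect the two $2\times2$ minors using $\cosh^2-\sinh^2=1$ together with the addition formula $\cosh\omega t_1\cosh\omega t_0-\sinh\omega t_1\sinh\omega t_0=\cosh\omega(t_1-t_0)$. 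After applying the half-angle identities $\cosh\theta-1=2\sinh^2(\theta/2)$ and $\sinh\theta=2\sinh(\theta/2)\cosh(\theta/2)$, this yields exactly the target formula
\[
D(t_0,t_1)=\frac{4cU}{(-b)^{3/2}}\sinh\frac{\sqrt{-b}(t_1-t_0)}{2}\left[\sinh\frac{\sqrt{-b}(t_1-t_0)}{2}-\frac{\sqrt{-b}(t_1-t_0)}{2}\cosh\frac{\sqrt{-b}(t_1-t_0)}{2}\right],
\]
which in particular depends only on $\tau:=t_1-t_0$, as expected from the autonomy of the Jacobi equation.

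Finally I would settle the sign. For $a\in(0,1)$ one checks $a^4-8a^2-8<0$, so $c=\tfrac{8a(2a^2+1)(1-a^2)}{a^4-8a^2-8}<0$, while $U>0$ and $-b>0$; hence the prefactor $\tfrac{4cU}{(-b)^{3/2}}$ is strictly negative. Writing $x=\tfrac12\sqrt{-b}\,\tau>0$, the factor $\sinh x$ is positive, and the bracket $\sinh x-x\cosh x$ is negative, since $g(x):=\sinh x-x\cosh x$ satisfies $g(0)=0$ and $g'(x)=-x\sinh x<0$ for $x>0$. The product of a negative prefactor, a positive factor, and a negative bracket is strictly positive, so $D(t_0,t_1)>0$ for every $t_1>t_0$; thus no conjugate point exists along $c_0$. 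The only genuinely delicate step is the determinant evaluation: one must carry the signs and the $\sqrt{-b}$ factors correctly through the integration so that the transcendental bracket emerges in precisely the displayed form, after which the sign analysis is entirely routine.
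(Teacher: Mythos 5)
Your proposal follows the paper's argument exactly: reduce conjugacy to the vanishing of the determinant $D(t_0,t_1)$ from (\ref{eqn36})--(\ref{eqn37}), evaluate it in closed form using the constancy of $U$ and $\theta_3$ along $c_0$, and conclude $D>0$ for $t_1>t_0$. Your computation reproduces the paper's formula for $D$ correctly, and your sign analysis (prefactor negative since $c<0$, $U>0$; bracket negative since $g(x)=\sinh x - x\cosh x$ has $g(0)=0$, $g'(x)=-x\sinh x<0$) carefully justifies the step the paper dismisses as ``obvious,'' so the proof is correct and essentially identical in approach.
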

 
\section{\textbf{The conclusion}}
\begin{theorem} 
Let $c_0$ be the circle centered at the origin in $B^2(1)$, the $c_0$ is a proper strong maximum of the isoperimetric problem, when using the Holmes-Thompson area. 
\end{theorem}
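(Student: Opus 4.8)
The plan is to obtain the conclusion directly from the parametric sufficiency theorem of Hestenes (Theorem 2.3), once each of its five hypotheses has been checked for the circle $c_0$. The one conceptual point to settle at the outset is that the isoperimetric problem here asks us to \emph{maximize} the Holmes-Thompson area $A_{HT}$ under fixed length $L$, whereas Theorem 2.3 is phrased for a \emph{minimum}. I would therefore apply the theorem to $-J$, i.e. recast the problem as minimizing $-A_{HT}$ subject to the same constraint, which reverses the inequalities in hypotheses (3), (4) and (5). The negative multiplier $\lambda_0$ and the negative Weierstrass excess found in the earlier sections are exactly what make these reversed inequalities hold.

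First I would collect the structural hypotheses, all of which are already established. Theorem 4.2 states that $c_0=(a\cos t,a\sin t)$ is an isoperimetric extremal of $J$ with multiplier $\lambda_0<0$, giving hypothesis (1). Theorem 4.3 shows that $P_1$ and $P_2$ do not vanish identically along $c_0$, so $c_0$ is normal, giving hypothesis (2). Theorem 5.3 supplies the strict Weierstrass condition in its maximum form, $E(x,\dot x,u)<0$ for every admissible $u\neq k\dot x$ with $k>0$, giving hypothesis (3). At this point only the second-order conditions remain.

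For hypotheses (4) and (5) I would pass to the second variation. Hypothesis (5) is the strengthened Legendre condition: along $c_0$ the reduced Hessian $H_1=h_{\dot x_1\dot x_1}/\dot x_2^2$ is a nonzero constant (it is the coefficient of $\ddot\omega$ computed in Section 6), so the form $\sum_{i,j}h_{\dot x_i\dot x_j}y_iy_j$ is sign-definite for every $y\neq k\dot x_0$, with the sign appropriate to a maximum. Hypothesis (4) is the definiteness of the constrained second variation $J''(c_0,y)$; by the Jacobi criterion recalled at the end of Section 2 (the parametric analogue of Theorem 2.2), this is equivalent to the absence of points conjugate to the initial point, and that is precisely Theorem 6.1, where the determinant $D(t_0,t_1)$ is shown to be strictly positive for $t_1>t_0$. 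Consequently $J''(c_0,y)$ retains the fixed sign dictated by $H_1$ for every admissible variation $y\neq\rho(t)\dot x_0$ satisfying the linearized length constraint.

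With all five hypotheses verified in the form appropriate to a maximum, Theorem 2.3 yields that $c_0$ is a proper strong maximum of the isoperimetric problem for the Holmes-Thompson area. I expect the main obstacle to lie not in any individual computation, since each is supplied by an earlier section, but in the consistent sign bookkeeping: one must ensure that the single sign $\lambda_0<0$ propagates coherently through the Weierstrass inequality (Theorem 5.3), the Legendre form, and the Jacobi determinant (Theorem 6.1), so that the minimum-type machinery of Theorem 2.3 genuinely produces a maximum rather than a minimum.
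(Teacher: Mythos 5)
Your proposal is correct and follows essentially the same route as the paper: the paper's proof likewise verifies the Legendre condition (5) by computing $\sum_{i,j}h_{\dot x_i\dot x_j}y_iy_j=\frac{\lambda_0(2a^2+1)}{a(1-a^2)^{5/2}}(\cos(t)y_1+\sin(t)y_2)^2<0$ for $y\neq k\dot c_0$, and then invokes Theorem 2.3 together with the extremality, normality, Weierstrass, and no-conjugate-point results of the preceding sections, with the sign reversal for a maximum handled exactly as you describe via $\lambda_0<0$.
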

\begin{proof} At the critical curve $c_0=(a\cos(t), a\sin(t))$,  it can be verified that 
  \[\sum_{i,j=1}^2 h_{\dot{x}_i \dot{x}_j}y^iy^j=\frac{\lambda_0 (2a^2+1)}{a(1-a^2)^{\frac{5}{2}}}(\cos(t)y_1+\sin (t)y_2)^2.\]
 For $\lambda_0<0$, the inequality $\sum_{i,j=1}^2 h_{\dot{x}_i \dot{x}_j}y_iy_j<0$ holds for all $y\neq k \dot{c}_0(t)$. By the Theorem 2.3 and combining the results we have proved in previous section will lead to the conclusion.  
\end{proof}
 
Actually, we conjecture $c_0$ achieves the global maximum of the isoperimetric problem in $(F_B, B^2(1))$, when using the Holmes-Thompson area. 
\begin{conjecture}
Let $c_0$ be the circle centered at the origin in $(F_B, B^2(1))$, then $c_0$ encloses the maximal Holmes-Thompson area among all the simple closed curves which are smooth and have the fixed length. 
\end{conjecture}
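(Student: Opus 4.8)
The plan is to deduce the result from the parametric sufficiency theorem of Hestenes (Theorem 2.3) by recasting our maximization as a minimization. Since we wish to \emph{maximize} $A_{HT}=\int f\,\mathrm{d}t$ subject to the fixed-length constraint $L=\int g\,\mathrm{d}t$, this is equivalent to \emph{minimizing} $-A_{HT}$ under the same constraint. Accordingly I would introduce the auxiliary functional $J^\ast=-A_{HT}+\lambda^\ast L=\int(-f+\lambda^\ast g)\,\mathrm{d}t=\int h^\ast\,\mathrm{d}t$ and choose the multiplier $\lambda^\ast=-\lambda_0>0$, where $\lambda_0<0$ is the value supplied by Theorem 4.2. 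The Euler-Lagrange operator is linear, and along $c_0$ we have $EL(f)+\lambda_0\,EL(g)=0$ with $EL(g)\neq 0$ by the normality established in Theorem 4.3; hence $EL(-f+\lambda^\ast g)=(\lambda_0+\lambda^\ast)EL(g)$ vanishes along $c_0$ precisely when $\lambda^\ast=-\lambda_0$. With this choice $h^\ast=-(f+\lambda_0 g)=-h$, so $c_0$ is again an isoperimetric extremal and the whole problem reduces to checking the five hypotheses of Theorem 2.3 for $J^\ast$.

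Conditions (1) and (2) are then immediate: Theorem 4.2 gives that $c_0$ is extremal, and Theorem 4.3 gives normality, which depends only on $g$ and is therefore untouched by the passage to $h^\ast$. For the strict Weierstrass condition (3) the decisive observation, already used in Proposition 5.2, is that $f$ is linear in the velocities $\dot x$, so its contribution to the Weierstrass function cancels identically; thus the E-function of $J^\ast$ equals $\lambda^\ast$ times the (nonnegative, by convexity of the Finsler length integrand) Weierstrass function of $g$ alone, which is strictly positive off the ray $u=k\dot x$ since $\lambda^\ast>0$. Equivalently, Theorem 5.3 gives that the E-function of $J$ is negative, so that of $J^\ast=-J$ is positive, exactly hypothesis (3).

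The same linearity drives conditions (4) and (5). Because $f_{\dot x_i\dot x_j}=0$ one has $h^\ast_{\dot x_i\dot x_j}=\lambda^\ast g_{\dot x_i\dot x_j}$, and a direct computation at $c_0$ gives $\sum_{i,j}g_{\dot x_i\dot x_j}y_iy_j=\frac{2a^2+1}{a(1-a^2)^{5/2}}(\cos t\,y_1+\sin t\,y_2)^2>0$ for every $y\neq k\dot c_0$; multiplying by $\lambda^\ast>0$ yields hypothesis (5). For hypothesis (4) I would reuse the conjugate-point analysis of Section 6: since $h^\ast=-h$, the operators $H_1,H_2$ both change sign, so the normalized coefficient $b$ (their ratio) is unchanged and the fundamental solutions $\theta_1,\theta_2$ are the same, while $U$ depends only on $g$. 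The determinant $D(t_0,t_1)$ of Theorem 6.1 is at most multiplied by an overall sign and in particular \emph{never vanishes} for $t_1>t_0$, so $c_0$ carries no conjugate point. Combined with the correctly-signed strengthened Legendre condition (5), the parametric analogue of Theorem 2.2 then gives $J^{\ast\prime\prime}(c_0,y)>0$ for all admissible $y\neq\rho\dot c_0$.

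With all five hypotheses verified, Theorem 2.3 certifies that $c_0$ is a proper strong minimum of $-A_{HT}$, that is, a proper strong maximum of $A_{HT}$ among admissible curves of fixed length, which is the assertion. The step requiring the most care, and the main conceptual obstacle, is exactly this sign bookkeeping: one must confirm that each of the three definiteness hypotheses (3)-(5) flips to the direction dictated by $\lambda_0<0$, and that the conjugate-point criterion survives the substitution $h\mapsto h^\ast=-h$ without spuriously producing a conjugate point. The reason everything goes through cleanly is structural rather than computational: the Holmes-Thompson area integrand $f$ is linear in the velocities, so it contributes nothing to either the Weierstrass function or the second-variation quadratic form, and all second-order behaviour is governed by the length integrand $g$ together with the single multiplier $\lambda_0$, whose negativity is the genuine source of maximality.
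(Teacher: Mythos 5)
There is a genuine gap, and it is not in the sign bookkeeping: your entire argument can only ever deliver a \emph{local} statement, while the statement you are asked to prove is \emph{global}. The paper itself leaves this as an open conjecture precisely for this reason, and proves only the local version (Theorem 7.1), by essentially the argument you describe. Recall how a ``proper strong minimum'' is defined in Section 2: there must exist a neighborhood $\mathcal{F}$ of $x_0$ in $tx$-space such that $I(x)\geq I(x_0)$ for all admissible arcs whose elements $(t,x(t))$ lie in $\mathcal{F}$. The word ``strong'' refers to the topology (a $C^0$-neighborhood of the trace of the curve rather than a $C^1$-neighborhood), not to any global reach. Consequently, when you verify the five hypotheses of Theorem 2.3 for $J^\ast=-A_{HT}+\lambda^\ast L$ and invoke the sufficiency theorem, the conclusion is only that $c_0$ maximizes $A_{HT}$ among curves of the prescribed length that stay uniformly close to the circle $|x|=a$. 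It says nothing about a competitor of the same Finsler length that is far from $c_0$ --- for instance a highly non-convex curve, a loop displaced toward the boundary of $B^2(1)$ where the area density $\sigma_{HT}(r)=(1+r^2-\tfrac18 r^4)/(1-r^2)^{7/2}$ blows up, or any curve not homotopic to $c_0$ within the neighborhood $\mathcal{F}$. The Weierstrass condition and the absence of conjugate points are pointwise and second-order data along $c_0$; they cannot rule out a distant global competitor.

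To upgrade to the global claim one needs a genuinely different tool --- a calibration or field-of-extremals argument covering all of $B^2(1)$, a symmetrization adapted to the non-reversible metric $F_B$, or an explicit isoperimetric inequality relating $A_{HT}$ and $L$ valid for all simple closed curves --- none of which is supplied by Hestenes' theorem or by your reduction. Your verification of hypotheses (1)--(5) for $J^\ast$ is essentially a correct restatement of what the paper already does for $J$ (the two are related by an overall sign, so the checks are equivalent), but it reproves Theorem 7.1 rather than the conjecture.
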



{\small DEPARTMENT OF MATHEMATICS, EAST CHINA NORMAL UNIVERSITY }

{\small SHANGHAI 200062, CHINA}

{\small E-mail address: 51160601143@st.ecnu.edu.cn}

\vspace{6pt}

{\small DEPARTMENT OF MATHEMATICS, EAST CHINA NORMAL UNIVERSITY }

{\small SHANGHAI 200062, CHINA}

{\small E-mail address: lfzhou@math.ecnu.edu.cn}

\end{document}